\tikzstyle{io} = [rectangle, rounded corners, minimum width=2cm, minimum height=1cm,text centered, draw=black, fill=white]
\tikzstyle{arrow} = [thick,->,>=stealth]
\newtheorem{theorem}{Theorem}[section]
\newtheorem{lemma}[theorem]{Lemma}
\newtheorem{corollary}[theorem]{Corollary}
\newtheorem{definition}[theorem]{Definition}
\newtheorem{remark}[theorem]{Remark}
\newtheorem{notation}[theorem]{Notation}
\numberwithin{equation}{section}
\renewenvironment{proof}[1][\proofname]{%
  \par\pushQED{\qed}\normalfont%
  \trivlist\item[\hskip\labelsep\bfseries#1{.}]%
}{%
  \popQED
}
\def\DO{\mathcal D}
\def\RE{\mathbb R}
\def\N{\mathbb N}
\def\C{\mathcal C}
\def\L{\mathcal L}
\def\A{\mathcal A}
\def\G{\mathcal G}
\def\cp{\circledast}
\begin{document}

\title[Algebras of Schwartz distributions]{An existence and uniqueness result about algebras of Schwartz distributions}

\author{Nuno Costa Dias}

\author{Cristina Jorge}

\author{Jo\~{a}o Nuno Prata}

 \maketitle

\begin{abstract}
We prove that there exists essentially one {\it minimal} differential algebra of
distributions $\A$, satisfying all the properties stated in the
Schwartz impossibility result [L. Schwartz, Sur l’impossibilité de la multiplication des distributions, 1954], and such that
$\C_p^{\infty} \subseteq \A  \subseteq \DO' $ (where
$\C_p^{\infty}$ is the set of piecewise smooth functions and
$\DO'$ is the set of Schwartz distributions over $\RE$). This algebra is
endowed with a multiplicative product of distributions, which is a generalization of the product
defined in [N.C.Dias, J.N.Prata, A multiplicative product of distributions and a class of ordinary differential equations with
distributional coefficients, 2009]. If the algebra is not minimal, but satisfies the previous conditions, is closed under anti-differentiation and the dual product by smooth functions, and the distributional product is continuous at zero then it is necessarily an extension of $\A$.   
\end{abstract}

{\bf Keywords}: Schwartz distributions, Multiplicative products, Algebras of distributions, Existence and uniqueness theorems.

{\bf AMS Subject Classifications (2020)}: 46F10; 46F05; 46F30

\section{Introduction}

The Schwartz famous impossibility result \cite{Sch54} states that:

\begin{theorem} \label{SchwartzT}
There is no associative algebra ($\G,+,\circledast $) satisfying the following properties:
\begin{enumerate}
	\item [(A1)] The space of Schwartz distributions $\DO'$ over $\RE$ is linearly embedded into ${\mathcal G}$.
	\item [(A2)] The function $f(x)=1$ is the identity in $\G$.
	\item [(A3)] There exists a linear derivative operator $D:\G\rightarrow \G$ that:
	\begin{enumerate}
	\item [(a)] satisfies the Leibniz rule, and
	\item [(b)] coincides with the usual distributional derivative $D_x$ in $\G \cap \DO'$.
	\end{enumerate}
	\item [(A4)] The multiplication $\circledast$ coincides with the usual product of functions in $(\G\cap\C)\times(\G \cap\C)$, $\C$ is the space of continuous functions over $\RE$.
\end{enumerate}
\end{theorem}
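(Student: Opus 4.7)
My plan is to argue by contradiction: assume such an associative algebra $(\G,+,\circledast)$ exists, and derive $\delta=0$ inside $\DO'\subseteq\G$, which is absurd. The strategy is to lift the four classical identities
\[
D(\log|x|)=\mathrm{pv}(1/x),\quad D(H)=\delta,\quad x\cdot\mathrm{pv}(1/x)=1,\quad x\cdot\delta=0
\]
(where $H$ is the Heaviside step and $\delta$ the Dirac distribution) from $\DO'$ into $\G$, and then to collapse them against each other by a single associativity move.

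First I would establish $x\circledast\mathrm{pv}(1/x)=1$ in $\G$. The continuous function $u(x)=x\log|x|$ (with $u(0):=0$) lies in $\G\cap\C$; its distributional derivative is $\log|x|+1$, so (A3)(b) yields $D(u)=\log|x|+1$ in $\G$. Identifying $u$ with the product $x\circledast\log|x|$ in $\G$ --- the delicate point, addressed below --- and applying the Leibniz rule (A3)(a) together with (A2) and (A3)(b) on $\log|x|\in\G\cap\DO'$, one obtains
\[
\log|x|+1=D(x\circledast\log|x|)=\log|x|+x\circledast\mathrm{pv}(1/x),
\]
so $x\circledast\mathrm{pv}(1/x)=1$. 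A parallel computation, this time identifying $H\circledast x$ with the continuous function $x_+$ and using $D(x_+)=H$ against $D(H\circledast x)=\delta\circledast x+H$, yields $\delta\circledast x=0$.

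Associativity of $\circledast$ together with (A2) now chain together:
\[
\delta=\delta\circledast 1=\delta\circledast\bigl(x\circledast\mathrm{pv}(1/x)\bigr)=(\delta\circledast x)\circledast\mathrm{pv}(1/x)=0\circledast\mathrm{pv}(1/x)=0,
\]
which is the sought contradiction.

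The main obstacle --- and the place where the proof is genuinely nontrivial --- is the identification of the mixed products $x\circledast\log|x|$ and $H\circledast x$ with the continuous functions $x\log|x|$ and $x_+$ inside $\G$. Axiom (A4) literally guarantees this only when \emph{both} factors are continuous, whereas $\log|x|$ and $H$ are merely locally integrable. One bridges the gap by a regularization argument: approximating the non-continuous factor by continuous ones, applying (A4) to the regularized products, and passing to the limit using some form of sequential continuity of the embedding $\DO'\hookrightarrow\G$ (or of $\circledast$ itself). This is where Schwartz's original argument is most delicate and where additional (often implicit) hypotheses on $\G$ are typically invoked; everything else in the proof is the purely formal Leibniz/associativity bookkeeping above.
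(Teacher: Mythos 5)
A point of reference first: the paper does not prove Theorem \ref{SchwartzT} at all --- it is quoted from Schwartz's note \cite{Sch54} --- so your proposal can only be compared with the classical argument. Your architecture is exactly that argument: extract $x\circledast \mathrm{pv}(1/x)=1$ and $\delta\circledast x=0$ from the Leibniz rule, then collapse them by associativity via $\delta=\delta\circledast\bigl(x\circledast\mathrm{pv}(1/x)\bigr)=(\delta\circledast x)\circledast\mathrm{pv}(1/x)=0$. That skeleton, and the two Leibniz computations you perform with it, are correct.

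The genuine gap is the step you yourself flag and defer: the identifications $x\circledast\log|x|=x\log|x|$ and $H\circledast x=x_+$. The repair you propose --- regularize the discontinuous factor and pass to the limit --- cannot be carried out under (A1)--(A4): nothing in those hypotheses gives any continuity of $\circledast$ or of the embedding $\DO'\hookrightarrow\G$, and the present paper makes precisely the point that such continuity is an \emph{additional} assumption (cf. (A5.2)). So, as written, this step would fail. The standard and purely algebraic way to close the gap is to apply (A4) one level up, where \emph{both} factors are continuous, and then use Leibniz once more. Concretely: $x\circledast(x\log|x|)=x^{2}\log|x|$ by (A4); differentiating with (A3) gives $x\log|x|+x\circledast\log|x|+x=2x\log|x|+x$, whence $x\circledast\log|x|=x\log|x|$, after which your derivation of $x\circledast\mathrm{pv}(1/x)=1$ goes through verbatim. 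Likewise $x_+\circledast x=x_+^{2}$ by (A4), and $D(x_+\circledast x)=H\circledast x+x_+=2x_+$ forces $H\circledast x=x_+$, hence $\delta\circledast x=0$. With these replacements your proof is complete and uses nothing beyond (A1)--(A4); without them, the limiting argument is a step that the stated hypotheses do not support.
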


If we replace (A1) by: 
\begin{enumerate}
\item [(A1')] \hspace{0.0cm} $\C^\infty_p\subseteq \G\subseteq \DO'$
\end{enumerate}
where $\C^{\infty}_p$ is the set of piecewise smooth functions, then: 

\begin{theorem} \label{DP}
Let $\A \equiv \cup_{i=0}^\infty D^i_x [\C^{\infty}_p]$ be the minimal space containing $\C_p^\infty$ and closed for $D_x$, and let $*_M$, $M \subseteq \RE$, be the multiplicative product of distributions given in Definition \ref{Gprod1}.
 The family of associative algebras ($\A$,+,$*_M$), $M \subseteq \RE$ satisfies the conditions (A1') and (A2)-(A4).
\end{theorem}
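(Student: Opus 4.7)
The plan is to verify the five items—(A1'), (A2), (A3), (A4) together with the algebra structure—one at a time, with most of the work concentrated on the closure of $\A$ under $*_M$ and on the Leibniz rule. First I would treat the inclusion chain (A1'): $\C_p^\infty \subseteq \A$ is immediate from the $i=0$ term of the union, and $\A \subseteq \DO'$ holds because each $D_x^i[\C_p^\infty]$ is a set of distributional derivatives of locally integrable functions, hence lies in $\DO'$. As a preliminary, I would show that $\A$ is a vector space by exhibiting a common order of differentiation for any two of its elements: if $u = D_x^i[f]$ and $v = D_x^j[g]$ with $j \le i$, then since $\C_p^\infty$ is stable under indefinite integration (antiderivatives of piecewise smooth functions are piecewise smooth), one can write $v = D_x^i[G]$ where $G$ is an $(i-j)$-fold antiderivative of $g$, and therefore $\alpha u + \beta v = D_x^i[\alpha f + \beta G] \in \A$.

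Next I would check (A2) and (A3). The constant function $1$ lies in $\C_p^\infty \subseteq \A$, and since Definition \ref{Gprod1} defines $*_M$ so that smooth functions act through the usual dual product, $1 *_M u = u *_M 1 = u$ for any $u \in \A$. For (A3) I would take $D = D_x$. The inclusion $D_x[\A] \subseteq \A$ is immediate from $D_x \circ D_x^i = D_x^{i+1}$, linearity is standard, and since $\A \cap \DO' = \A$, condition (A3)(b) is trivial. The only substantive point is the Leibniz rule for $*_M$, which I expect to be built directly into the definition in \ref{Gprod1}: the intrinsic purpose of the subset $M \subseteq \RE$ is to fix the ambiguity in the product at singular points in a manner consistent with differentiation, so the Leibniz rule should reduce to a pointwise verification on each interval where both factors are smooth, plus a check at the exceptional points where one uses the jump structure of the piecewise smooth representatives.

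For (A4), elements of $\A \cap \C$ are continuous elements of $\cup_i D_x^i[\C_p^\infty]$; one expects these to be at most continuous piecewise smooth functions (since any genuine $\delta$-type derivative is not a function). On such elements the product $*_M$ should by construction reduce to the usual pointwise product, as the set $M$ governs only the singular contributions and these vanish in the continuous case. I would verify this by writing both factors as $D_x^0$ of piecewise smooth continuous functions and invoking the base case of Definition \ref{Gprod1}.

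The main obstacle will be showing that $\A$ is actually closed under $*_M$ and that $*_M$ is associative on $\A$. Closure requires that $u *_M v$, a priori a distribution, can be written as $D_x^k[h]$ for some $k$ and some $h \in \C_p^\infty$; this will likely proceed by taking representatives $u = D_x^i[f]$, $v = D_x^j[g]$, applying the Leibniz rule repeatedly to relate $u *_M v$ to $D_x^{i+j}$ of a combination built from $f$, $g$, and lower-order products, and then checking inductively on $i+j$ that each such combination lies in $\C_p^\infty$ after possibly absorbing derivatives. Associativity is the other delicate point: since the representation of an element of $\A$ as $D_x^i[f]$ is not unique, one must confirm that $(u *_M v) *_M w$ and $u *_M (v *_M w)$ yield the same distribution independently of the chosen representatives, which I anticipate will follow from an inductive argument on the total differentiation order together with the Leibniz rule and the associativity already known to hold on smooth parts.
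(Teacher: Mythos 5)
Your handling of (A1'), (A2), (A3)(b) and the closure of $\A$ under $D_x$ is fine, and the observation that $\C_p^\infty$ is stable under antidifferentiation (so that $\A$ is a vector space) is correct. The gap lies exactly where you locate the main difficulty: closure under $*_M$, associativity, and the Leibniz rule. Your plan for these is circular. You propose to compute $u*_M v$ for $u=D_x^i[f]$, $v=D_x^j[g]$ by ``applying the Leibniz rule repeatedly'' and inducting on $i+j$; but the Leibniz rule for $*_M$ is one of the statements to be proved, not a tool available for defining or evaluating the product, and Definition \ref{Gprod1} is not recursive in the derivative order. It is an explicit, closed formula: every $F\in\A$ has a canonical decomposition $F=f+\Delta^F$ with $f\in\C_p^\infty$ and $\Delta^F$ a finite combination of $\delta_{x_i}^{(j)}$ (Lemma \ref{f6}), and $F*_M G$ is defined by specifying the product of each pair of pieces, with $M$ selecting whether the left- or right-hand smooth value is used at each singular point. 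Your repeated appeals to what the definition ``should by construction'' do (that $M$ only governs singular contributions, that the Leibniz rule is ``built in'', that there is a ``base case'') are guesses that substitute for engaging with this structure; in particular the Leibniz rule at the exceptional points is a genuine computation (e.g.\ differentiating $H*_M\delta_0$ requires knowing $\delta_0*_M\delta_0=0$), not something that holds by fiat.

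The paper's route avoids all of this by reducing $*_M$ to the single product $*$ of Definition \ref{jin}, whose properties (inner operation in $\A$, associativity, distributivity, Leibniz rule, agreement with the pointwise product on $\A\cap\C$) are already established in \cite{DP09} and restated as Theorem \ref{impo}. The key identity is $F*_M G=F_M*G_M+G_{\RE\backslash M}*F_{\RE\backslash M}$, where $F_M$ is the auxiliary distribution (\ref{rii}); closure, distributivity, associativity and the Leibniz rule for $*_M$ then all follow by short computations from the corresponding properties of $*$. Your worry about representation-independence of $(u*_Mv)*_Mw$ is also moot under the actual definition, since the canonical form of Lemma \ref{f6} is unique and no choice of representative $D_x^i[f]$ enters. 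To repair your argument you would either need to reproduce the content of Theorem \ref{impo} from scratch (a substantial computation with the explicit formula (\ref{prodf})) or adopt the paper's reduction; as written, the central claims are asserted rather than proved.
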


The products $*_M$, $M \subseteq \RE$ (cf. Definition \ref{Gprod1}) are extensions (to the case of possible intersecting singular supports) of the product of distributions with disjoint singular supports presented by H\"ormander in [pag.55 \cite{Hor83}]. Theorem \ref{DP} was proved by two of us for the case $M=\RE$ in \cite{DP09}, and will be (easily) extended to the general case $M\subseteq \RE$ in section 2.3.

In this paper we want to study the related problem of whether the associative algebras ($\A,+,*_M$) are unique, i.e. the only ones satisfying the conditions (A1') and (A2)-(A4).

Let us introduce the following notation: We say that in $\G$ the product $\cp$ by smooth functions is continuous (or simply that $\cp$ is {\it partially continuous}) at $F\in \G$ iff for every $\xi\in \C^{\infty}$, and every sequence $(F_n)_{n\in \N}$, $F_n
\overset{\DO'}{\longrightarrow} F$ in $\G$, we have:
$
\xi\cp F_n \overset{\DO'}{\longrightarrow} \xi \cp F$, and  
$F_n \cp \xi \overset{\DO'}{\longrightarrow} F \cp \xi $,
where $\overset{\DO'}{\longrightarrow}$ denotes convergence in distribution sense. 

We note that the dual product and the family of products $*_M$ (defined in $\A$) are all partially continuous (cf. Theorem 2.11(vi)). We also remark that if $\cp$ (defined in $\G$) is partially continuous at zero then it is partially continuous everywhere in $\G$ (because $\cp$ is bilinear, and $\G$ is a vector space). 


Our results are summarized in the following Theorem and Corollary:

\vspace{0.25cm}

\noindent\textbf{Main Theorem.}  {\it Let 
($\G,+,\circledast$) be an associative algebra of distributions that satisfies the conditions (A1'), (A2)-(A4) given above, and
\begin{enumerate}
\item [(A5.1)] Every $F \in \G$ is locally a finite order derivative of some $G \in \G \cap \C$,
\item [(A5.2)] The product $\cp$ is partially continuous at zero; 
\end{enumerate}
then $\A \subseteq \G$ and the restriction of $\circledast$ to $\A$ is given by $*_M$ for some $M \subseteq \RE$. In other words, ($\A,+,*_M$) is a subalgebra of ($\G,+,\circledast$).  
}

\begin{remark} \label{remark1}

Notice that every $F \in \DO'$ is locally a finite order derivative of some continuous function $G \in \C$ (cf. Theorem 3.4.2, \cite{Zemanian}). The condition (A5.1) adds the requirement that if $F\in \G$ then also $G \in \G$. This condition can be replaced by (cf. Remark \ref{remarkak}):
\begin{enumerate}
\item [(A5.1')] \hspace{0.0cm} Anti-differentiation and the dual product by smooth functions are inner operations in $\G$.
\end{enumerate}
The conditions (A5.1) and (A5.1') are both satisfied by 
$\G = \DO'$ and $\G =\A$. 

We also note that the conditions (A5.1) and (A5.2) can be replaced by the single, stronger condition (cf. Remark \ref{remarkA6}):
\begin{enumerate}
\item [(A6)] Every $F \in \G$ is globally a finite order derivative of some $G \in \G \cap \C$;
\end{enumerate}
which is satisfied by $\G=\A$ and by $\G=\DO'(\Omega)$ for arbitrary compact sets $\Omega \subset \RE$. We will see that (A6) implies (A5.2) (and, of course, also (A5.1)). The imposition of one of the conditions (A5.1,A5.2), (A5.1',A5.2) or (A6) is required for the proof of Theorem \ref{ak} which is an important intermediate result in the proof of the Main Theorem.
\end{remark}

\begin{corollary}\label{corollary1}
For $\A$ as defined above, let ($\A,+,\circledast$) be an associative algebra satisfying the conditions (A2)-(A4). Then $\circledast =*_M$ for some $M \subseteq \RE$.  	
\end{corollary}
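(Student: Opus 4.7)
The strategy is to reduce the corollary directly to the Main Theorem applied to $\G = \A$. The hypotheses (A2), (A3) and (A4) are explicitly assumed in the statement of the corollary, and (A1') is automatic since by construction $\C^{\infty}_p \subseteq \A \subseteq \DO'$. What remains is to verify, for the particular space $\G = \A$, either the pair (A5.1)-(A5.2) or, more efficiently, the stronger single condition (A6) of Remark \ref{remark1}, which according to that remark implies both.

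I would verify (A6) directly from the definition $\A = \cup_{i=0}^{\infty} D_x^i[\C^{\infty}_p]$. Given any $F \in \A$, write $F = D_x^i G$ with $G \in \C^{\infty}_p$ and $i \geq 0$. Since piecewise smooth functions are locally bounded (hence locally integrable), the antiderivative $H(x) := \int_0^x G(t)\, dt$ is a continuous function; moreover, on each piece of the partition underlying $G$, $H$ is smooth, so $H \in \C^{\infty}_p \cap \C \subseteq \A \cap \C$. By construction $D_x H = G$ distributionally, hence $F = D_x^{i+1} H$ is a global finite-order derivative of a continuous element of $\A$. This establishes (A6) for $\G = \A$.

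With all the hypotheses of the Main Theorem in place, that theorem yields $\A \subseteq \G$ (trivially, with equality) and asserts that the restriction of $\circledast$ to $\A$ coincides with $*_M$ for some $M \subseteq \RE$. Since here $\G = \A$, the restriction is the product itself, so $\circledast = *_M$, as required.

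The only potential obstacle is the verification of (A6), and even this is routine once one unpacks the definition of $\A$ and uses that integration sends $\C^{\infty}_p$ into $\C^{\infty}_p \cap \C$; all the genuine analytical work, namely the identification of an arbitrary admissible product on $\A$ with one of the products $*_M$, has been carried out once and for all inside the Main Theorem. Hence the proof of the corollary is essentially a bookkeeping exercise verifying hypotheses.
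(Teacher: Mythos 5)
Your proposal is correct and follows essentially the same route as the paper: both reduce the corollary to the Main Theorem with $\G=\A$ by noting that (A1') is automatic and that $\A$ satisfies (A6), hence (A5.1) and (A5.2). Your explicit verification of (A6) via anti-differentiation of piecewise smooth functions (which stay in $\C_p^\infty\cap\C$) is a sound filling-in of a step the paper merely asserts.
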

The proof of this Corollary is straightforward: since the space $\A$ satisfies (A1') and (A6), and thus (cf. Remark \ref{remark1}) also (A5.1) and (A5.2), it follows from the Main Theorem that $\circledast =*_M$ for some $M\subseteq \RE$.  

The problem of proving the uniqueness of the algebras ($\A,+,*_M$) was considered before in an article by B. Fuchssteiner that was published in Mathematische Annalen \cite{Fuch68} (see also \cite{Fuch84}) and recently reviewed in the Ph.D thesis {\cite{Trenn}}. The main result of  {\cite{Fuch68}} is basically our Corollary \ref{corollary1}. Unfortunately, the paper {\cite{Fuch68}} is not so well-known and came to our knowledge only after we have concluded our own proof of the uniqueness result. In spite of the obvious intersection with the results of {\cite{Fuch68}}, we have decided to write down our own results because: (i) our proof is different and, in our view, simpler than the one presented in {\cite{Fuch68,Trenn}}; and (ii) our results are more general, because they do not apply only to the space $\A$, but instead to the family $\G \subseteq \DO'$. 
In practice this means that we do not impose the restriction that the product $\circledast$ is an inner operation in $\A$; instead we prove that this  is a consequence of the properties (A1'), (A2)-(A5) for a general space $\G \subseteq \DO'$.  

Finally, we remark that the algebras $(\A , +,*_M)$ provide an interesting setting to obtain intrinsic formulations (i.e. defined within the space of Schwartz distributions) for some classes of differential operators and differential equations with singular coefficients. This approach has been  explored namely for Schr\"odinger operators with point interactions and for ODEs with singular coefficients \cite{DJP16,DJP19,DJP20}. It yields a formulation which is more general than the ones based on other intrinsic products like the model products \cite{A12,HO07,Obe92}, and alternative to the non-intrinsic formulations like the ones in terms of Colombeau generalized functions \cite{Col84,Col92,GKO01,HKO13,Obe92,Ros87}. 


In the next section we study the main properties of the product $*_M$ and show that for all $M \subseteq \RE$, the algebras $(\A,+,*_M)$ satisfy the conditions in Theorem \ref{DP}. In section 3 we prove the Main Theorem.

\begin{notation}
Spaces of functions or distributions over $\RE$ are denoted by calligraphic capital letters $\A$, $\C$, $\DO$, $\DO'$,.... 

Capital roman letters $F$, $G$ and $J$ denote general distributions; $\phi$, $\psi$ and $\xi$ are smooth functions; and $f$, $g$ and $h$ are locally integrable functions or regular distributions (we normally use the same notation for both objects). If we need to be more specific, we use the subscript $\DO'$ for regular distributions; for instance $f_{\DO'}$ is the regular distribution associated to the locally integrable function $f$.   

The characteristic function of $\Omega \subseteq \RE$ is written $\chi_\Omega$; the Heaviside step function is $H=\chi_{\RE^+}$. As usual $\delta_{x}$ is the Dirac measure with support at $x$; if $x=0$ we write only $\delta$.
 




\end{notation}

\section{The algebras $(\A,+,*_M)$}


\subsection{General definitions}

Let $\DO$ be the space of smooth functions with support on a compact subset of $\RE$, and $\DO'$ is its dual (the space of Schwartz distributions). 
As usual, supp $F$ denotes the support of $F \in \DO'$, and sing supp $F$ denotes its singular support.

For every locally integrable function $f \in \L_{\rm loc}^1$ one defines a regular distribution $f_{\DO'}\in {\DO'}$ by
$$
\left\langle f_{\DO'},t\right\rangle=\int_\RE {f(x) t (x)} \, dx  ~, t \in \DO ~.
$$
By abuse of notation, we will usually identify $f_{\DO'}$ with $f$. The $n$th-order Schwartz distributional derivative of the distribution $F$  is defined by
$$\left\langle D_x^n F ,t \right\rangle =(-1)^{n}\left\langle F , d_x^{n}{t} \right\rangle ~, \quad  t\in\DO$$
where $d_x^{n}{t}$ denotes the $n$th-order classical  (pointwise)  derivative of ${t}$.
If $f$ is absolutely continuous, the Schwartz distributional derivative and the classical pointwise derivative (defined a.e.) coincide, i.e.
\begin{equation} \label{1.1}
D_x f_{\DO'} =\left({{{d_x}}f}\right)_{\DO'} ~.
\end{equation}
The dual product of a function $\phi\in \C^\infty$ by a distribution $F \in \DO'$ is defined by
\begin{equation} \label{1.2}
\left\langle \phi F,t\right\rangle=\left\langle  F,\phi t\right\rangle, \quad  t\in \DO
\end{equation}
and it is a generalization of the standard product of functions, i.e. 
\begin{equation} \label{1..2}
\phi (h_{\DO'}) =(\phi h)_{\DO'} ~, \quad \mbox{for all } h \in \L_{\rm loc}^1 ~.
\end{equation}
The dual product is bilinear. Moreover, the distributional derivative $D_x$ satisfies the Leibniz rule with respect to the dual product:
\begin{equation} \label{kkk}
D_x (\phi F) =({d_x}\phi) F+\phi D_x F ~.
\end{equation}

\subsection{The multiplicative product $*$}

 For a detailed presentation and proofs of the main results, the reader should refer to \cite{DP09}. Let $\C^\infty_p$ be the space of piecewise smooth functions on $\RE$: $f\in\C^\infty_p$ iff there is a finite set $I\subset\RE$ such that $f\in\C^\infty(\RE\backslash I)$ and the lateral limits $\lim_{x\rightarrow x_0^\pm}f^{(n)}(x)$ exist and are finite for all $x_0\in I$ and all $n\in \N_0$. We have of course $\C_p^\infty \subset \L_{\rm loc}^1$.

\begin{definition} \label{espaco}
Let $\A$ be the space of piecewise smooth functions $\C^\infty_p$ (regarded as regular distributions) together with their distributional derivatives to all orders.
\end{definition} 

All the elements of $\A$ are distributions with finite singular supports. They can be written explicitly in the form:

\begin{lemma}\label{f6}
$F \in \A$ iff there is a finite set ${ I}=\{x_1<x_2<...<x_m\}$ associated with a set of open intervals
$\Omega_i=(x_i,x_{i+1})$, $i=0,..,m$ (where $x_0=-\infty$ and $x_{m+1}=+\infty$)  such that:
\begin{equation}\label{FormF}
F= f+\Delta^F
\end{equation}
where $f\in \C_p^\infty$ is of the form ($\chi_{\Omega_i}$ is the characteristic function of $\Omega_i$):
\begin{equation}\label{FormF2} 
 f=\sum_{i=0}^m f_i \chi_{\Omega_i} ~,\quad f_i \in \C^{\infty}
\end{equation}
and $\Delta^F$ has support on a subset of $I$:
\begin{equation}\label{FormFF2} 
 \Delta^F=\sum_{i=1}^m \Delta^F_{x_i}  =\sum_{i=1}^m \sum_{j=0}^n
{F}_{ij}\delta^{(j)}_{x_i} ~ ,\quad { F}_{ij} \in \RE ~.
\end{equation}
We have, of course, sing supp $F
\subseteq { I}$.
\end{lemma}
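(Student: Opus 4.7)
The plan is to establish the two directions of the ``iff'' separately. The ``only if'' direction will go by induction on the order $k$ such that $F = D_x^k h$ with $h \in \C_p^\infty$. The base case $k=0$ is immediate: take $f=h$, $I$ equal to (a superset of) the discontinuity set of $h$, and $\Delta^F = 0$. For the inductive step I would first verify the following one-step formula. If $g = \sum_{i=0}^m g_i \chi_{\Omega_i} \in \C_p^\infty$ with $g_i \in \C^\infty$ and $\Omega_i = (x_i, x_{i+1})$, then using $\chi_{(a,b)} = H(\cdot - a) - H(\cdot - b)$ (so that $D_x \chi_{(a,b)} = \delta_a - \delta_b$) together with the Leibniz rule (\ref{kkk}) for the dual product, one computes
\begin{equation*}
D_x\bigl(g_i \chi_{\Omega_i}\bigr) = (d_x g_i)\chi_{\Omega_i} + g_i(x_i)\delta_{x_i} - g_i(x_{i+1})\delta_{x_{i+1}},
\end{equation*}
with the $x_0 = -\infty$ and $x_{m+1} = +\infty$ terms absent. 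Summing over $i$ shows that $D_x g$ again has the form (\ref{FormF})--(\ref{FormFF2}) over the same set $I$.

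Combining this with the obvious identity $D_x \delta^{(j)}_{x_i} = \delta^{(j+1)}_{x_i}$, if $F = f + \Delta^F$ has the claimed form then so does $D_x F$. This closes the induction and yields the ``only if'' direction; along the way the inclusion $\text{sing supp } F \subseteq I$ is manifest, since off $I$ the distribution $F$ agrees with a smooth function.

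For the ``if'' direction I would argue by explicit construction of a piecewise smooth primitive. Given $F = f + \Delta^F$ with $\Delta^F = \sum_{i,j} F_{ij} \delta^{(j)}_{x_i}$ and $n = \max\{j : F_{ij} \neq 0\}$, pick $k = n+1$. Any locally integrable piecewise smooth function admits a piecewise smooth $k$-fold antiderivative (each antiderivative of a $\C_p^\infty$ function is obtained by integration and remains in $\C_p^\infty$), so we can choose $g_f \in \C_p^\infty$ with $D_x^k g_f = f$. For each singular term, the function
\begin{equation*}
u_{ij}(x) = \frac{(x-x_i)^{k-j-1}}{(k-j-1)!}\,H(x - x_i)
\end{equation*}
lies in $\C_p^\infty$ (indeed is at least once continuously differentiable since $k-j-1 \ge 0$) and satisfies $D_x^k u_{ij} = \delta^{(j)}_{x_i}$. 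Setting $g = g_f + \sum_{i,j} F_{ij} u_{ij} \in \C_p^\infty$ gives $F = D_x^k g \in D_x^k[\C_p^\infty] \subseteq \A$.

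The main obstacle is the inductive step of the first direction: keeping careful track of boundary contributions when differentiating each $g_i \chi_{\Omega_i}$ and verifying that new breakpoints introduced by differentiation (if $g_i$ itself has breakpoints inside $\Omega_i$, or if one chooses to refine the partition) can be absorbed into a finite enlarged set $I$, so that the structural form (\ref{FormF})--(\ref{FormFF2}) is preserved at every step. Once that bookkeeping is done correctly, both directions reduce to routine computations using only the definition of $\A$, the distributional Leibniz rule, and elementary antidifferentiation.
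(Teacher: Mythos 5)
Your proof is correct in both directions. The paper itself states this lemma without proof (it is imported from \cite{DP09}), and your argument --- induction on the order of differentiation for the ``only if'' direction, and explicit construction of a $\C_p^\infty$ function whose $k$-th derivative is $\delta^{(j)}_{x_i}$ for the ``if'' direction --- is the standard one and checks out. Two small points deserve attention. First, your base case silently uses the fact that any $h\in\C_p^\infty$ in the paper's sense (smooth off a finite set $I$ with finite lateral limits of all derivatives) can be written as $\sum_i h_i\chi_{\Omega_i}$ with each $h_i$ \emph{globally} smooth; this is precisely the content of (\ref{FormF2}) in the case $k=0$, and it requires extending $h|_{\Omega_i}$ from $\overline{\Omega_i}$ to a smooth function on all of $\RE$ (Whitney--Seeley extension, or Borel's theorem applied at each endpoint). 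This is the only place where the hypothesis on lateral limits of \emph{all} derivatives is actually used, so it should be made an explicit step rather than absorbed into ``take $f=h$''. Second, the parenthetical claim that $u_{ij}$ is at least once continuously differentiable fails when $j=n$ (then $u_{in}=H(x-x_i)$), but this is harmless since all you need is $u_{ij}\in\C_p^\infty$, which holds. Finally, the worry in your closing paragraph about new breakpoints appearing under differentiation is unfounded: once the pieces $g_i$ are globally smooth, $D_x$ never enlarges $I$, so the bookkeeping is exactly as in your displayed one-step formula.
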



Let us recall the definition of the H\"ormander product of distributions with non-intersecting singular supports [pag.55,  \cite{Hor83}].

\begin{definition}
Let $F,G\in\DO'$ be two distributions such that $\textrm {sing supp } F$ and $\textrm {sing supp } G$ are finite disjoint sets. Let $\left\{\Omega_i\subset\RE,i=1,\ldots,d\right\}$ be a finite covering of $\RE$ such that, on each open set $\Omega_i$, either F or G is a smooth function. The H\"ormander product of F by G is defined by
$$F \cdot G : (F\cdot G)|_{\Omega_i}=F|_{\Omega_i}G|_{\Omega_i}$$
where $F|_{\Omega_i}$ denotes the restriction of the $F$ to the set $\Omega_i$, and likewise for the other distributions. Moreover, the product  $F|_{\Omega_i}G|_{\Omega_i}$ is the dual product defined in (\ref{1.2}).
\end{definition}

Let us emphasise that the H\"ormander product is well-defined for all $F,G \in \A$ provided that $\textrm {sing supp } F$ and $\textrm {sing supp } G$ are finite disjoint sets. We now extend the H\"ormander product to the case of distributions with intersecting singular supports (see \cite{DP09} for details)

\begin{definition}\label{jin}
Let $F,G\in\A$. The product $*$  is defined  by 
\begin{equation} \label{gy}
F*G=\mathop {\lim }\limits_{\varepsilon  \downarrow 0}F(x) \cdot G(x+\epsilon)
\end{equation}
where the product $F(x)\cdot G(x+\epsilon)$ is the H\"ormander product and the limit is taken in the distributional sense.
\end{definition}

Notice that for sufficiently small $\epsilon >0$, $F(x)$ and $G(x+\epsilon)$ have disjoint singular supports,
hence the H\"ormander product in (\ref{gy}) is well-defined.

\indent

The next theorem provides an explicit formula for $F*G$. Let $F,G \in \A$, let $I=(\text{sing supp F} \, \cup \text{sing supp G})
=\{x_1<..<x_m\}$, and consider the  associated set of open intervals $\Omega_i=(x_i,x_{i+1})$, $i=0,..,m$ (with
$x_0=-\infty$ and $x_{m+1}=+\infty$). Then, in view of Lemma \ref{f6}, $F$ and $G$ can be written in the form:
\begin{equation}\label{187}
F = \sum_{i=0}^{m}f_i\chi_{\Omega_i}+ \sum_{i=1}^m \Delta^F_{x_i} \qquad , \qquad
G = \sum_{i=0}^{m}g_i\chi_{\Omega_i}+ \sum_{i=1}^m \Delta^G_{x_i} 
\end{equation}
where $f_i,g_i \in\C^\infty$ and $\Delta^F_{x_i}=0$ if $x_i \in I \backslash \text{sing supp F}$, and likewise for $\Delta^G_{x_i} $. 
Then we have:

\begin{theorem}\label{2.5}
Let $F,G \in \A$ be written in the form (\ref{187}). Then $F*G $ is given explicitly by
\begin{equation} \label{prodf}
F * G =  \sum_{i=0}^{m}f_i g_i\chi_{\Omega_i}+\sum_{i=1}^m\left[  g_i \Delta^F_{x_i} + f_{i-1} \Delta^G_{x_i}\right] ~.
\end{equation}
\end{theorem}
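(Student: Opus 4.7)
My plan is a direct computation exploiting the bilinearity of the H\"ormander product and the fact that for each small $\epsilon>0$ the singular supports of $F(x)$ and $G(x+\epsilon)$ are disjoint, so the H\"ormander product is unambiguously defined. I write $F=f+\Delta^F$ and $G(\cdot+\epsilon)=g^\epsilon+\Delta^{G,\epsilon}$, where $f=\sum_{i=0}^m f_i\chi_{\Omega_i}$, $g^\epsilon=\sum_{i=0}^m g_i(\cdot+\epsilon)\chi_{\Omega_i-\epsilon}$, and $\Delta^{G,\epsilon}$ is the translate of $\Delta^G=\sum_j\sum_k G_{jk}\delta^{(k)}_{x_j}$, i.e.\ a finite sum of derivatives of $\delta$ supported at the points $x_j-\epsilon$. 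For $\epsilon$ small enough the four pieces have mutually disjoint finite singular supports, so by bilinearity
\begin{equation*}
F(x)\cdot G(x+\epsilon)=f\cdot g^\epsilon + f\cdot \Delta^{G,\epsilon}+\Delta^F\cdot g^\epsilon+\Delta^F\cdot \Delta^{G,\epsilon},
\end{equation*}
and it suffices to compute the distributional limit of each term.

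The first term is the ordinary pointwise product of two piecewise smooth, locally bounded functions; since $g(\cdot+\epsilon)\to g$ in $\L^1_{\rm loc}$ by dominated convergence, $f\cdot g^\epsilon\to f\cdot g = \sum_i f_ig_i\chi_{\Omega_i}$ in $\DO'$. For the second term, each point $x_j-\epsilon$ lies in $\Omega_{j-1}$ for small $\epsilon$, where $f$ coincides with the globally smooth function $f_{j-1}\in \C^\infty$; the H\"ormander product locally reduces to the dual product $f_{j-1}\cdot\delta^{(k)}_{x_j-\epsilon}$, which using the Leibniz expansion $\phi\cdot\delta^{(k)}_a=\sum_{l=0}^k\binom{k}{l}(-1)^l\phi^{(l)}(a)\delta^{(k-l)}_a$ together with the continuity of translation in $\DO'$ and the continuity of the derivatives of $f_{j-1}$ at $x_j$, converges to $f_{j-1}\cdot\delta^{(k)}_{x_j}$. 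Summing over $j,k$ gives $\sum_j f_{j-1}\Delta^G_{x_j}$. The third term is symmetric: for small $\epsilon>0$ the point $x_i$ lies in $\Omega_i-\epsilon$ where $g^\epsilon$ equals the smooth function $g_i(\cdot+\epsilon)$, and the dual product $g_i(\cdot+\epsilon)\cdot\Delta^F_{x_i}$ converges to $g_i\cdot\Delta^F_{x_i}$, yielding $\sum_i g_i\Delta^F_{x_i}$.

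Finally, the fourth (singular-singular) term vanishes identically for every small $\epsilon>0$: the supports of $\Delta^F$ and $\Delta^{G,\epsilon}$ are the finite sets $\{x_i\}$ and $\{x_j-\epsilon\}$, which are disjoint (even when $i=j$, since $\epsilon>0$). Covering $\RE$ by the two open sets $\RE\setminus\operatorname{supp}\Delta^F$ and $\RE\setminus\operatorname{supp}\Delta^{G,\epsilon}$, each restriction of the product vanishes (one factor is identically zero), so the H\"ormander product on the whole line is zero. Adding the four contributions produces exactly the right-hand side of (\ref{prodf}).

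The main obstacle I anticipate is the bookkeeping in the two mixed terms: one has to be careful to identify which smooth piece ($f_{j-1}$ vs.\ $f_j$, and $g_i$ vs.\ $g_{i-1}$) governs the dual product near each shifted singular point, and the asymmetry between these choices is precisely what produces the $f_{i-1}$ versus $g_i$ asymmetry in the final formula (\ref{prodf}); this asymmetry is a direct consequence of taking $\epsilon\downarrow 0$ with the sign convention $G(x+\epsilon)$ in Definition \ref{jin}. Once that identification is made, all the limits reduce to continuity of translation in $\DO'$ and to the standard Leibniz expansion for $\phi\cdot\delta^{(k)}_a$.
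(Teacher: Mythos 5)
Your proof is correct, and it is essentially the canonical direct computation from Definition \ref{jin}: the paper itself defers the proof of Theorem \ref{2.5} to \cite{DP09}, where the argument proceeds in the same way (split $F$ and the translated $G$ into regular and singular parts, use bilinearity and disjointness of the shifted singular supports, and pass to the limit term by term). Your bookkeeping of which smooth piece ($f_{j-1}$ near $x_j-\epsilon$, $g_i$ near $x_i$) governs each mixed term is exactly what produces the asymmetry in (\ref{prodf}), so nothing is missing.
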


Finally, the main properties of the product $*$ are summarized in the following Theorem (cf. Theorems {3.16 and 3.18}, \cite{DP09}):

\begin{theorem} \label{impo}
The product $*$ is an inner operation in $\A$, it is associative, distributive, non-commutative and it reproduces the product of continuous functions in $\A\, \cap \,\C$. The distributional derivative $D_x$ is an inner operator in $\A$ and satisfies the Leibniz rule with respect to the product $*$.
\end{theorem}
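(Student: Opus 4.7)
My approach is to read off each of the seven assertions directly from the explicit formula (\ref{prodf}) in Theorem \ref{2.5}, which already reduces $F*G$ to a sum of a piecewise smooth function plus finitely many derivatives of $\delta_{x_i}$. For the inner operation claim: given $F,G\in \A$, choose a common refinement $I=\{x_1<\cdots<x_m\}\supseteq \mathrm{sing\,supp}\,F\cup\mathrm{sing\,supp}\,G$; the right-hand side of (\ref{prodf}) is manifestly of the form (\ref{FormF})--(\ref{FormFF2}), so $F*G\in\A$ by Lemma \ref{f6}. Distributivity follows immediately from the bilinearity of the formula (both in the smooth coefficients $f_i,g_i$ and in the singular parts $\Delta^F_{x_i},\Delta^G_{x_i}$). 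Reproduction of the product of continuous functions is equally direct: if $F,G\in\A\cap\C$, then $\Delta^F=\Delta^G=0$ and the $f_i,g_i$ glue continuously across each $x_i$, so $F*G=\sum f_i g_i \chi_{\Omega_i}$ coincides with the pointwise product of continuous functions. Non-commutativity is witnessed by a single example, e.g.\ $\delta*H=\delta$ but $H*\delta=0$ from the formula, since the right limit of $H$ at $0$ is $1$ while the left limit is $0$.

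For associativity I would expand $(F*G)*H$ and $F*(G*H)$ in a common decomposition with singular set $I\supseteq\mathrm{sing\,supp}\,F\cup\mathrm{sing\,supp}\,G\cup\mathrm{sing\,supp}\,H$. The smooth parts on each $\Omega_i$ both equal $f_i g_i h_i$. At each $x_i$ the two triple products give
\[
h_i\bigl(g_i \Delta^F_{x_i}+f_{i-1}\Delta^G_{x_i}\bigr)+f_{i-1}g_{i-1}\Delta^H_{x_i}
\quad\text{and}\quad
g_i h_i \Delta^F_{x_i}+f_{i-1}\bigl(h_i \Delta^G_{x_i}+g_{i-1}\Delta^H_{x_i}\bigr),
\]
which agree after using the obvious associativity $(\phi\psi)\Delta=\phi(\psi\Delta)$ of the dual product by smooth functions on each singular term (a consequence of (\ref{1.2})).

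That $D_x$ is inner in $\A$ is immediate from Lemma \ref{f6}, because $D_x(f_i\chi_{\Omega_i})=(d_xf_i)\chi_{\Omega_i}+f_i(\delta_{x_i}-\delta_{x_{i+1}})$ and $D_x\delta^{(j)}_{x_i}=\delta^{(j+1)}_{x_i}$ preserve the form (\ref{FormF})--(\ref{FormFF2}). The step I expect to be the least mechanical is the Leibniz rule $D_x(F*G)=(D_xF)*G+F*(D_xG)$. Here the cleanest route is the limit definition (\ref{gy}): for each $\varepsilon>0$ the shifted distributions $F(x)$ and $G(x+\varepsilon)$ have disjoint singular supports, so the H\"ormander product acts locally as the dual product (\ref{1.2}), for which Leibniz (\ref{kkk}) holds; hence
\[
D_x\bigl(F(x)\cdot G(x+\varepsilon)\bigr)=\bigl(D_xF(x)\bigr)\cdot G(x+\varepsilon)+F(x)\cdot\bigl(D_xG(x+\varepsilon)\bigr).
\]
Passing to the limit $\varepsilon\downarrow 0$ in $\DO'$ and using continuity of $D_x$ then yields the identity. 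Alternatively one can differentiate (\ref{prodf}) term by term and match the result with the sum of the two formulas obtained by applying (\ref{prodf}) to $(D_xF,G)$ and $(F,D_xG)$; the delicate bookkeeping is to account for the jumps $f_i-f_{i-1}$ and $g_i-g_{i-1}$ produced by $D_x\chi_{\Omega_i}$, but they cancel against the $\delta_{x_i}$ contributions coming from the other factor. Throughout, I would refer the reader to \cite{DP09} for the full bookkeeping, since the present statement is Theorems 3.16 and 3.18 of that paper.
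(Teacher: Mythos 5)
Your proposal is correct in substance. Note first that the paper itself offers no proof of this theorem: it is stated with a pointer to Theorems 3.16 and 3.18 of \cite{DP09}, so there is no internal argument to compare against. Your reconstruction --- reading everything off the explicit formula (\ref{prodf}) of Theorem \ref{2.5} --- is a legitimate self-contained route, and each step checks out: closure and distributivity are immediate from the form of (\ref{prodf}) and Lemma \ref{f6}; the continuous case follows since $F\in\A\cap\C$ forces $\Delta^F=0$; the counterexample $\delta * H = \delta$ versus $H*\delta = 0$ agrees with Remark \ref{2.6} at $M=\RE$; and your two displayed triple products at each $x_i$ do coincide by associativity of the dual product. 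Two small points deserve care if this were written out in full. First, in the associativity step you must justify re-applying (\ref{prodf}) to $F*G$, i.e.\ verify that $\sum_i f_i g_i\chi_{\Omega_i}$ together with the singular part $g_i\Delta^F_{x_i}+f_{i-1}\Delta^G_{x_i}$ (expanded into the canonical form $\sum_j c_{ij}\delta^{(j)}_{x_i}$ via (\ref{1.2})) is exactly the decomposition of $F*G$ in the sense of Lemma \ref{f6}; you do this implicitly and it is fine, but it is the one place where the bookkeeping is not purely formal. Second, the Leibniz rule via the limit definition (\ref{gy}) additionally needs the (easy) observation that the H\"ormander product of distributions with disjoint finite singular supports satisfies Leibniz because it is locally the dual product (\ref{kkk}) and local identities glue over an open cover; with that, sequential continuity of $D_x$ on $\DO'$ closes the argument. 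Finally, a cosmetic remark: using $H$ for the third factor collides with the paper's reserved notation for the Heaviside function; a neutral letter such as $J$ would avoid confusion.
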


We conclude that the space $\A$, endowed with the product $*$, is
an associative (but non-commutative) differential algebra of
distributions that satisfies the properties stated in Theorem \ref{DP}. It is, however, not the unique algebra that satisfies these conditions, as we now show.

\subsection{The algebras ($\A,+,*_M$).}



Let $F,G \in \A$ be written in the form:
\begin{equation}\label{FormFG}
F= f+ \sum_{x_i \in I_F} \Delta_{x_i}^F \quad , \quad 
G= g+ \sum_{y_i \in I_G} \Delta_{y_i}^G 
\end{equation}
where $f,g\in\C^\infty_p$, $I_F=$ supp $\Delta^F$ and $I_G=$ supp $\Delta^G$. Then:
\begin{equation}\label{F*G}
F*G= fg + \sum_{y_i \in I_G} f * \Delta_{y_i}^G +  
\sum_{x_i \in I_F} \Delta_{x_i}^F *g
\end{equation}
and, likewise
\begin{equation}\label{G*F}
G*F= fg + \sum_{y_i \in I_G}  \Delta_{y_i}^G *f +  
\sum_{x_i \in I_F} g* \Delta_{x_i}^F ~.
\end{equation}
We can combine both formulas and obtain a slightly more general product (one that acts as $F*G$ on the points that belong to a given set $M \subseteq \RE$, and as $G*F$ on the points that don't belong to $M$): 

\begin{definition} \label{Gprod1}
Let $M \subseteq \RE$ and $F,G \in \A$. The product $*_M$ is defined by
 \begin{eqnarray} \label{Gprod11}
F*_M G & = & f \, g + \sum_{y_i \in I_G \cap M} f * \Delta_{y_i}^G +
 \sum_{x_i \in I_F \cap M} \Delta_{x_i}^F *g \\
 \nonumber & &+ \sum_{y_i \in I_G \backslash M}  \Delta_{y_i}^G *f 
+ \sum_{x_i \in I_F\backslash M} g * \Delta_{x_i}^F  
\end {eqnarray}
\end{definition}

Notice that for $M = \RE$ we have $F*_M G=F*G$ and for $M=\emptyset $, $F*_MG=G*F$. The next Remark provides some explicit formulas: 

\begin{remark}\label{2.6}
 Let $n,m \in \N_0$ and $s,t \in \RE$. Let $M \subseteq \RE$ and let $H$ be the Heaviside step function. It follows from (\ref{prodf}) and (\ref{Gprod11}) that:
$$
H(x-t) *_M  \delta_s^{(n)} = \delta_s ^{(n)} *_M H(x-t)=
\left\{ \begin{array}{l} 
\begin{tabular}{ll}
$\delta_s^{(n)}$  & \text{ if } $s>t$\\
0  & \text{ if } $s<t$
\end{tabular}
\end{array} \right.
$$
$$
\delta_t^{(n)} *_M  H(x-t) ~ = ~ H(t-x) *_M  
\delta_t^{(n)} ~ = ~ \chi_M(t) ~\delta_t^{(n)}
$$
$$
H(x-t) *_M  \delta_t ^{(n)} ~ = ~ \delta_t^{(n)} *_M  H(t-x) ~ = ~ (1-\chi_M(t))~ \delta_t^{(n)} 
$$
\begin{equation*}
\delta^{(n)}_s *_M \delta^{(m)}_t=0 ~.
\end{equation*}
\end{remark}


Let us introduce the following distributions:
\begin{definition}
Let $M \subseteq \RE$ and let $F \in \A$ be written in the form (\ref{FormFG}). 
The distribution $F_M\in\A$ associated with $F$ is defined by
\begin{equation}\label{rii}
F_M=\frac{\sqrt{2}}{2} f+\sqrt{2}~ \Xi^{F_M}   \quad \mbox{where} \quad  \Xi^{F_M}=\smashoperator{\sum_{x_i \in I_F \cap M}}  \Delta_{x_i}^F ~.
\end{equation}
\end{definition}

We can now write $F*_M G$ in a compact form: 

\begin{lemma} 
Let $F,G \in \A$ and let $F_M,G_M$ be the associated distributions of the form (\ref{rii}). Then
\begin{equation}\label{hh}
F*_M G=F_M*G_M + G_{\RE \backslash M} * F_{\RE \backslash M} ~.
\end{equation}
\end{lemma}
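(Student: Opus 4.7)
The plan is a direct bilinear expansion: I would substitute the explicit forms of $F_M,G_M,F_{\RE\setminus M},G_{\RE\setminus M}$ into the right-hand side of \eqref{hh}, expand the two $*$-products using bilinearity, kill the cross terms that are products of two singular parts via Remark \ref{2.6}, and recognize the result as the expansion of $F*_MG$ in Definition \ref{Gprod1}.

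More precisely, writing $\Xi^{F_M}=\sum_{x_i\in I_F\cap M}\Delta^F_{x_i}$ and similarly $\Xi^{G_M}$, the definition \eqref{rii} gives $F_M=\tfrac{\sqrt 2}{2}f+\sqrt 2\,\Xi^{F_M}$ and $G_M=\tfrac{\sqrt 2}{2}g+\sqrt 2\,\Xi^{G_M}$. Using bilinearity of $*$ (Theorem \ref{impo}),
\begin{equation*}
F_M*G_M=\tfrac{1}{2}f*g+f*\Xi^{G_M}+\Xi^{F_M}*g+2\,\Xi^{F_M}*\Xi^{G_M}.
\end{equation*}
By the last identity in Remark \ref{2.6}, $\delta_s^{(n)}*\delta_t^{(m)}=0$, so the double-singular cross term $\Xi^{F_M}*\Xi^{G_M}$ vanishes. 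Since $f,g\in\C^\infty_p$, the product $f*g$ reduces to the pointwise product $fg$ (in fact $f*g=fg=gf$ as piecewise smooth functions). An identical computation with $M$ replaced by $\RE\setminus M$ and the roles of $F,G$ swapped yields
\begin{equation*}
G_{\RE\setminus M}*F_{\RE\setminus M}=\tfrac{1}{2}gf+g*\Xi^{F_{\RE\setminus M}}+\Xi^{G_{\RE\setminus M}}*f.
\end{equation*}
Adding the two expressions, the $\tfrac{1}{2}fg$ and $\tfrac{1}{2}gf$ terms combine into $fg$, and after expanding $\Xi^{F_M}=\sum_{x_i\in I_F\cap M}\Delta^F_{x_i}$ and the analogous sums, the four remaining terms match exactly the four sums appearing in \eqref{Gprod11}. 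Hence $F_M*G_M+G_{\RE\setminus M}*F_{\RE\setminus M}=F*_MG$.

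This proof is essentially routine algebra; the only nontrivial ingredients are bilinearity of $*$ and the annihilation identity $\delta_s^{(n)}*\delta_t^{(m)}=0$. I do not expect any genuine obstacle; the only point requiring care is the bookkeeping of the normalization factors $\tfrac{\sqrt 2}{2}$ and $\sqrt 2$, which are chosen precisely so that the smooth-smooth cross term yields $\tfrac{1}{2}fg$ and the smooth-singular cross terms have coefficient $1$, allowing the two halves $F_M*G_M$ and $G_{\RE\setminus M}*F_{\RE\setminus M}$ to assemble into $F*_MG$ without overcounting the regular part $fg$.
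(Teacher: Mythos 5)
Your proof is correct and follows essentially the same route as the paper: expand both $*$-products bilinearly, use the vanishing of products of two purely singular parts, and match the result against Definition \ref{Gprod1}. The only difference is that you spell out the annihilation of the $\Xi^{F_M}*\Xi^{G_M}$ term and the identity $f*g=fg$, which the paper leaves implicit; your bookkeeping of the $\tfrac{\sqrt 2}{2}$ and $\sqrt 2$ factors is accurate.
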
 
\begin{proof}
Using (\ref{rii}), we have
\begin{equation*}
F_M*G_M= \frac{1}{2}fg+ f * \Xi^{G_M} +
\Xi^{F_M} *g
\end{equation*}
and likewise:
\begin{equation*}
G_{\RE \backslash M}*F_{\RE \backslash M}= \frac{1}{2}fg + \Xi^{G_{\RE \backslash M}} *f
+  g * \Xi^{F_{\RE \backslash M}} ~.
\end{equation*}
Hence, (cf. (\ref{Gprod11})): 
\begin{equation*}
F_M*G_M + G_{\RE \backslash M} * F_{\RE \backslash M}=F*_M G ~.
\end{equation*}
\end{proof}

We now study the main properties of $*_M$:

\begin{theorem} 
For all $M\subseteq \RE$, the product $*_M$ is (i) an inner operation in $\A$, (ii) distributive and (iii) associative. Moreover, (iv) it reproduces the usual product of continuous functions in $\A \cap \C$, and (v) the dual product of smooth functions by distributions in $\A$. (vi) It is also partially continuous at zero and (vii) $D_x$ satisfies the Leibniz rule with respect to $*_M$. 
\end{theorem}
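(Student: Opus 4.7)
The plan is to deduce each of (i)--(vii) from the corresponding statement for the base product $*$ (Theorem \ref{impo}), using the explicit formula (\ref{Gprod11}). The organizing idea is that $*_M$ is a pointwise construction: at each singular point $x_i\in M$ it applies the $*$-rule of Theorem \ref{2.5}, while at $x_i\notin M$ it applies the opposite rule, coinciding there with the product $F*'G:=G*F$; on each open interval $\Omega_i$ on which all factors are smooth it reduces to the classical pointwise product. The opposite product $*'$ inherits distributivity, associativity and Leibniz from $*$ at once, since $D_x(G*F)=D_x G*F+G*D_x F$ rewrites as $D_x(F*'G)=D_x F*'G+F*'D_x G$.

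Properties (i), (ii), (iv), (v) are essentially immediate from (\ref{Gprod11}). For (i), each summand on the right of (\ref{Gprod11}) either lies in $\C^\infty_p\subset\A$ or is a $*$-product of a smooth function with an element of $\A$, hence lies in $\A$ by Theorem \ref{impo}. Distributivity (ii) follows from the linearity of the maps $F\mapsto f$ and $F\mapsto\Delta^F_{x_i}$ combined with bilinearity of $*$. For (iv), if $F,G\in\A\cap\C$ then $\Delta^F=\Delta^G=0$ and (\ref{Gprod11}) collapses to $fg$, the classical product of continuous functions. For (v), if $F=\xi\in\C^\infty$ then $I_\xi=\emptyset$ and (\ref{Gprod11}) reduces to $\xi g+\sum_{y\in I_G}\xi*\Delta^G_y$; applying Theorem \ref{2.5} with one smooth factor gives $\xi*\Delta^G_y=\Delta^G_y*\xi=\xi\Delta^G_y$ in the dual-product sense, so $\xi*_M G=\xi G$ (and symmetrically on the other side). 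Partial continuity (vi) is then a direct corollary of (v): if $F_n\to 0$ in $\DO'$ with $F_n\in\A$ then $\xi*_M F_n=\xi F_n\to 0$, since the dual product by a fixed smooth function is sequentially continuous in $\DO'$.

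The substantive items are associativity (iii) and the Leibniz rule (vii), which I plan to handle in parallel by a pointwise analysis on a common refinement of the singular supports. Given $F,G,H\in\A$ (for associativity) or $F,G\in\A$ (for Leibniz), choose a finite $I=\{x_1<\cdots<x_m\}$ containing all the relevant singular supports, with associated open intervals $\Omega_i$. On each $\Omega_i$ every product in sight restricts to a classical pointwise product of smooth functions, for which both identities hold trivially. At each $x_i\in M$, (\ref{Gprod11}) shows that every occurrence of $*_M$ at $x_i$ reduces to the $*$-rule, so the singular parts of $(F*_M G)*_M H$ and $F*_M(G*_M H)$ at $x_i$ agree with those of $(F*G)*H$ and $F*(G*H)$ and thus coincide by associativity of $*$; similarly the singular part of $D_x(F*_M G)$ at $x_i$ equals that of $D_x(F*G)$, which by Theorem \ref{impo} equals that of $D_x F*G+F*D_x G$, and hence at $x_i\in M$ equals that of $D_x F*_M G+F*_M D_x G$. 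The same arguments at $x_i\notin M$ go through with $*'$ in place of $*$.

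I expect (vii) to be the main technical obstacle. The subtlety is that the distributional derivative acting on the piecewise smooth parts generates new $\delta$-type singularities at their jumps, so the singular supports of $D_x F$, $D_x G$ and $D_x(F*_M G)$ may strictly contain those of $F$ and $G$; one must verify that the extra deltas produced on both sides of the Leibniz identity match at each $x_i$. This amounts to a local check using only the lateral values of the smooth parts of $F$ and $G$ at $x_i$, the distributional Leibniz rule (\ref{kkk}) for the dual product, and the $M$-choice at that point; it mirrors the corresponding computation for $*$ in \cite{DP09} and is the only laborious step of the proof.
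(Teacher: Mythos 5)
Your proposal is correct in substance but takes a genuinely different route from the paper on the two non-trivial items. For (i), (ii), (iv), (v), (vi) your arguments coincide with the paper's (the paper reads (i) and (ii) off the compact identity $F*_M G=F_M*G_M+G_{\RE\backslash M}*F_{\RE\backslash M}$ of Lemma 2.10 rather than directly off (\ref{Gprod11}), but this is cosmetic). For associativity and Leibniz, the paper works globally and algebraically: it introduces the associated distributions $F_M=\tfrac{\sqrt2}{2}f+\sqrt2\,\Xi^{F_M}$, proves (iii) by an explicit expansion of $(F*_MG)*_MJ$ reduced term-by-term to the associativity of $*$, and proves (vii) from the decomposition $F*_MG=F*G+\tfrac{\sqrt2}{2}\bigl(G*F-F*G\bigr)_{\RE\backslash M}$ together with the observation that $J=G*F-F*G$ has finite support, so that $D_x(J_{\RE\backslash M})=(D_xJ)_{\RE\backslash M}$ and everything collapses to the Leibniz rule for $*$. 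Your route is instead local: $*_M$ agrees with $*$ on a punctured neighbourhood of each $x_i\in M$ and with the opposite product $*'$ near each $x_i\notin M$, both of which are associative and satisfy Leibniz, and one glues over the open cover. This is a perfectly sound strategy (locality of $D_x$ and of the products, plus the fact that the common singular set $I$ absorbs the new $\delta$'s created by differentiating the jumps, makes the gluing legitimate), and it is arguably more conceptual; what the paper's algebraic device buys is that the one step you defer as ``the only laborious step'' --- matching the extra $\delta$-contributions at jump points on both sides of the Leibniz identity --- never has to be computed, since it is subsumed in the single identity $D_x(J_{\RE\backslash M})=(D_xJ)_{\RE\backslash M}$. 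If you follow your plan, you should actually carry out that local check (e.g.\ verify $D_x(H*_MH)=\delta*_MH+H*_M\delta=\chi_M(0)\delta+(1-\chi_M(0))\delta=\delta$, which shows how the $M$- and $\RE\backslash M$-contributions recombine); as written it is asserted rather than proved.
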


\begin{proof}
Let $F,G,J \in \A$ and let $F_M,G_M,J_M$ be the associated distributions defined by (\ref{rii}). Then:\\

(i) By Lemma \ref{f6} we have $F_M,G_M,F_{\RE \backslash M},G_{\RE \backslash M}\in\A$. Since ($\A,+,*$) is an algebra it follows that $F_M*G_M + G_{\RE \backslash M} * F_{\RE \backslash M}\in \A$ and therefore $F*_M G \in \A$.\\

(ii)  From (\ref{hh}) we have:
\begin{equation*}
F*_M J+G*_MJ=F_M*J_M + J_{\RE \backslash M} * F_{\RE \backslash M}+G_M*J_M + J_{\RE \backslash M} * G_{\RE \backslash M} ~.
\end{equation*}
Since $*$ is distributive and $F_M+G_M=(F+G)_M$ for all $M\subseteq\RE$, we get
\begin{align*}
F*_M J+G*_MJ&=(F+G)_M*J_{ M}  + J_{\RE \backslash M} * (F+G)_{\RE \backslash M}\\
&=(F+G)*_MJ
\end{align*}
which proves that the product is right-distributive. Equivalently, one proves that it is also left-distributive.\\

(iii) We have for $F,G,J\in\A$
$$(F*_M G)*_M J=(F*_MG)_M*J_M+J_{\RE \backslash M}*(F*_MG)_{\RE \backslash M}$$
Since
$$(F*_M G)_M=\frac{\sqrt{2}}{2} fg +\sqrt{2}\left( \Xi^{F_M} *g+ f*\Xi^{G_M}\right) $$
and
$$(F*_M G)_{\RE \backslash M}=\frac{\sqrt{2}}{2} fg +\sqrt{2} \left(g*\Xi^{F_{\RE \backslash M}}+ \Xi^{G_{\RE \backslash M}} *f \right) $$
a simple calculation shows that:
\begin{align*}
(F*_M G)*_MJ  = fgj &+ (f*g) * \Xi^{J_M}
+(f*\Xi^{G_M})*j +(\Xi^{F_M} *g)*j\\
&+\Xi^{J_{\RE \backslash M}}*(f*g)+j* (g*\Xi^{F_{\RE \backslash M}})+j*(\Xi^{G_{\RE \backslash M}}*f)   
\end{align*}
Using the associativity of $*$ we get
\begin{align*}
(F*_M G)*_MJ= fgj &+ f *(g*\Xi^{J_M})
+f*(\Xi^{G_M}*j)+\Xi^{F_M} *(g*j)\\
&+(\Xi^{J_{\RE \backslash M}}*f)*g 
+(j*\Xi^{G_{\RE \backslash M}})*f
+(j* g)*\Xi^{F_{\RE \backslash M}}   
\end{align*}
which is exactly $F*_M(G*_MJ)$. Hence the product $*_M$ is associative.\\

(iv)  If $F,G\in(\A\cap\C)$ then $F=f$ and $G=g$ with $f,g \in \C \cap \C_p^\infty$ (cf. (\ref{FormF})). Hence, from (\ref{Gprod11}), $F*_M G=fg$.\\

(v) If $F \in C^\infty$ then in (\ref{FormFG}) we have $F=f$. It follows from (\ref{prodf}) that $F*G=G*F=fG$ for all $G\in \A$ and so, from (\ref{Gprod11}), that $F*_MG=G*_MF=fG$.\\

(vi) Since $\xi *_M F= F*_M \xi= \xi F$ for all $\xi \in \C^\infty$ and $F\in \A$, the partial continuity of $*_M$ follows from the same property for the dual product. Let then $(F_n)_{n\in \N}$, $F \overset{\DO'}{\longrightarrow} 0$ be a sequence in $\A$. We have:
$$
\lim_{n\to +\infty} \langle \xi F_n,t \rangle =
\lim_{n\to +\infty} \langle F_n, \xi t\rangle =0\, , \quad \forall t \in \DO 
$$
and so $\xi F_n \overset{\DO'}{\longrightarrow} 0$.\\
 
(vii) Let us write $F*_M G$ in the form
\begin{equation}\label{F*_MG2}
F*_M G=F * G+\frac{\sqrt{2}}{2} J_{\RE \backslash M}
\end{equation}
where $J= G*F-F*G$,
and $J_{\RE \backslash M}$ is defined by (\ref{rii}). Notice that from (\ref{F*G}) and (\ref{G*F}) we have explicitly:
$$
\frac{\sqrt{2}}{2} J_{\RE \backslash M}= \sum_{y_i \in I_G \backslash M} \left(\Delta_{y_i}^G * f -f * \Delta_{y_i}^G \right) + \sum_{x_i \in I_F \backslash M} \left(g* \Delta_{x_i}^F  - \Delta_{x_i}^F * g \right) ~.
$$
Moreover, since $J$ is of finite support (supp $J \subseteq I_F \cup I_G$), we have
$$
D_x \left(J_{\RE \backslash M}\right) = \left(D_x J\right)_{\RE \backslash M} ~.
$$
It follows that
$$
D_x\left( F*_M G \right) = D_x\left( F * G \right) + \frac{\sqrt{2}}{2} \left( D_x(G*F) - D_x(F*G) \right)_{\RE \backslash M}
$$
and since $D_x$ satisfies the Leibniz rule with respect to the product $*$, by using (\ref{F*_MG2}) the terms on the r.h.s can be easily shown to yield 
$(D_x F) *_M G+F*_M (D_x G)$, concluding the proof.\\
\end{proof}

Theorem \ref{DP} in the Introduction is a simple corollary of the previous result.

\section{Main Theorem} 

In this section we prove the Main Theorem. 
Several preparatory results that are required for the proof will be given in section 3.1  (Theorems \ref{ak}, \ref{ii}, \ref{3.1} and \ref{ty}).  

\subsection{Preparatory results}

\begin{theorem}\label{ak}
Let $\xi \in \C^\infty$ and $F\in\G$. Then 
\begin{equation}\label{uo}
\xi\circledast F=\xi F=F\circledast \xi ~.
\end{equation}
\end{theorem}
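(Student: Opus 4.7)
The plan is to reduce the identity to the case of continuous distributions, where (A4) applies directly, and then extend to arbitrary $F\in\G$ by a mollification argument that uses the partial continuity (A5.2).

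For the base case, suppose $F\in\G\cap\C$. Since $\C^\infty\subseteq\C_p^\infty\subseteq\G$ by (A1'), both $\xi$ and $F$ lie in $\G\cap\C$, so (A4) gives $\xi\circledast F=\xi\cdot F$ as pointwise products. For $\xi$ smooth and $F$ continuous this equals the dual product: testing against $t\in\DO$, $\langle\xi\cdot F,t\rangle=\langle F,\xi t\rangle=\langle\xi F,t\rangle$. Hence $\xi\circledast F=\xi F$ and, symmetrically, $F\circledast\xi=\xi F$.

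For general $F\in\G$, I would mollify. Let $(\rho_\epsilon)_{\epsilon>0}\subset\DO$ be a standard mollifying family and let $F_\epsilon$ denote the corresponding mollifications of $F$. Each $F_\epsilon$ is a $\C^\infty$-function, hence lies in $\C^\infty\subseteq\G\cap\C$, and $F_\epsilon\overset{\DO'}{\longrightarrow}F$ as $\epsilon\downarrow 0$. The base case yields $\xi\circledast F_\epsilon=\xi F_\epsilon$. Partial continuity of $\circledast$ at $F$, which follows from (A5.2) by bilinearity of $\circledast$ on the vector space $\G$, then gives $\xi\circledast F_\epsilon\overset{\DO'}{\longrightarrow}\xi\circledast F$, while the standard distributional continuity of dual multiplication by the fixed smooth $\xi$ (that is, $\langle\xi F_\epsilon,t\rangle=\langle F_\epsilon,\xi t\rangle\to\langle F,\xi t\rangle=\langle\xi F,t\rangle$) gives $\xi F_\epsilon\overset{\DO'}{\longrightarrow}\xi F$. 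Uniqueness of distributional limits forces $\xi\circledast F=\xi F$; the identity $F\circledast\xi=\xi F$ follows identically from $F_\epsilon\circledast\xi=\xi F_\epsilon$.

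The most delicate point to verify is that partial continuity (A5.2) is actually applicable at $F$ to the mollified sequence: this requires both that $F_\epsilon\in\G$, which is granted by $\C^\infty\subseteq\G$, and that partial continuity propagates from zero to every point of $\G$, which is granted by bilinearity of $\circledast$. Once these two observations are in place the argument is essentially routine. If one prefers a proof that invokes condition (A5.1) explicitly, one may instead show by induction on $n\geq 0$, using the Leibniz rule (A3) for $\circledast$ together with formula (\ref{kkk}) for the dual product, that $\xi\circledast D^n G=\xi D^n G$ for every $G\in\G\cap\C$, and then patch together the local representations furnished by (A5.1) by means of a partition of unity to reach an arbitrary $F\in\G$; partial continuity simply makes the mollification route more direct.
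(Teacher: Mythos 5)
Your proof is correct, but it takes a genuinely different route from the paper's. The paper first establishes $\xi\circledast g^{(k)}=\xi g^{(k)}$ for $g\in\G\cap\C$ by induction on $k$ via the Leibniz rule (A3), then globalizes to arbitrary $F\in\G$ by a partition of unity combined with the local structure hypothesis (A5.1), invoking (A5.2) only at the very end to control the infinite tail of the partition sum. You instead approximate an arbitrary $F\in\G$ by its mollifications $F_\epsilon$, which are smooth and hence lie in $\C^\infty\subseteq\C_p^\infty\subseteq\G$ by (A1'), apply (A4) to each $F_\epsilon$, and pass to the limit using (A5.2) upgraded from continuity at zero to continuity at every point by bilinearity --- an upgrade the paper itself records in the Introduction, so this step is sound. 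Each ingredient checks out: $F_\epsilon\in\G$, $F_\epsilon\overset{\DO'}{\longrightarrow}F$, $F_\epsilon-F\in\G$ tends to zero, and the dual product by a fixed $\xi$ is weak-$*$ continuous. What your argument buys is economy: it uses neither (A5.1) nor the Leibniz rule, which suggests that for this particular theorem the hypotheses (A1'), (A2)--(A4) and (A5.2) already suffice. What it gives up is coverage of the alternative hypothesis sets discussed in Remarks \ref{remarkA6} and \ref{remarkak}: under (A6) alone the paper's inductive argument terminates at eq.~(\ref{jj}) with no continuity assumption whatsoever, whereas your mollification route genuinely needs (A5.2); similarly, the paper's partition-of-unity mechanism is what allows (A5.1) to be traded for (A5.1'). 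So your proof is a valid and arguably cleaner alternative under the stated hypotheses of the Main Theorem, but it does not subsume the variants the paper also wants to handle.
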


\begin{proof}
If $F\in \G \cap \C$ then from (A4)
\begin{equation*}
\xi\circledast F=\xi F ~.
\end{equation*}
Moreover, if (\ref{uo}) is valid for some $F\in\G $ (and all $\xi \in \C^\infty$) then it is valid for $F'=D_xF$:
 \begin{eqnarray*}
(\xi\circledast F)'=(\xi F)'&\Longleftrightarrow &\xi'\circledast F+\xi\circledast F'=\xi'F+\xi F'\\
&\Longleftrightarrow & \xi\circledast F'=\xi F'
\end{eqnarray*}
since $\xi'\circledast F=\xi'F$ by (\ref{uo}), and the dual product satisfies the Leibniz rule.
This proves that 
\begin{equation}\label{jj}
\xi\circledast g^{(k)} =\xi g^{(k)}
\end{equation}
for all $g\in \G\cap\C$, $\xi \in \C^\infty$ and all $k\in\N _0$.  

We now extend the previous result to all $F\in \G$. We will need to impose the extra conditions (A5.1) and (A5.2).
Let $(\phi_i)_i$ be a countable family of smooth real functions satisfying:
\begin{enumerate}

\item [(P1)] $\sum_{i=1}^{+\infty}\phi_i(x)=1,\,\forall x\in\RE$.
	
\item [(P2)] $\text{supp } \phi_i$ is compact.
  
\item [(P3)] If $\Xi \subset \RE$ is compact then $\Xi \cap \text{supp } \phi_i\neq \emptyset$ only for a finite number of functions $\phi_i$.
\end{enumerate}
Then, $\forall F\in \G$ we have
$$
F=(\sum_{i=1}^{+\infty}\phi_i)F=\sum_{i=1}^{+\infty}\phi_i F=\sum_{i=1}^{+\infty}\phi_i g_i^{(k_i)}
$$
where (cf. (A5.1)) $k_i\in \N_0$ and $g_i\in \C\cap \G$ is such that for some bounded open set $\Omega_i \supset$ supp $\phi_i$ we have $F|_{\Omega_ {i}}=g^{(k_i)}_i|_{\Omega_ {i}}$.
Using (\ref{jj}) we get:
$$
F=\sum_{i=1}^{+\infty}\phi_i\circledast g_i^{(k_i)} ~.
$$
Let $F_i=\phi_i\circledast g_i^{(k_i)}$. Then $F_i\in \G$ and is of compact support (because $\phi_i\circledast g_i^{(k_i)}=\phi_i g_i^{(k_i)}$ and $\text{supp } \phi_i$ is compact).
Hence $F_i=h_i^{(s_i)}$ for some $h_i\in \G \cap \C$, $s_i \in \N_0$ (cf. (A5.1)), and thus, for every $F\in\G$, $\xi \in \C^\infty $ and $n \in \N$:
 \begin{equation} \label{f*F0} 
 \xi\circledast F  =  \xi \circledast  \sum_{i=1}^{+\infty} h_i^{(s_i)} = \xi \circledast  \sum_{i=1}^{n} h_i^{(s_i)} + \xi \circledast  \sum_{i=n+1}^{+\infty} h_i^{(s_i)}~.
 \end{equation}
 Let $z_n=\sum_{i=n+1}^{+\infty} h_i^{(s_i)}$. Then
 $
 z_n=  F - \sum_{i=1}^{n} h_i^{(s_i)} \in \G
 $, and 
 $z_n \overset{\DO'}{\longrightarrow} 0$ (since $F=\lim_{n\to +\infty} \sum_{i=1}^{n} h_i^{(s_i)}$ in $\DO'$). 
We then rewrite (\ref{f*F0}) in the form
$$
\xi\circledast F  = \xi \circledast  z_n+  \xi \cp \sum_{i=1}^{n}  h_i^{(s_i)} = \xi \circledast  z_n+  \sum_{i=1}^{n} \xi \cp h_i^{(s_i)}
$$
and take the limit $n \to +\infty$. Using the partial continuity of the product (cf. (A5.2)), we find:
\begin{equation}\label{f*F}
\xi \cp F = \lim_{n\to +\infty} \sum_{i=1}^{n} \xi \cp h_i^{(s_i)} 
=\sum_{i=1}^{+\infty} \xi \, h_i^{(s_i)}  
 = \xi \sum_{i=1}^{+\infty}h_i^{(s_i)} =\xi \, F \nonumber
\end{equation}
where we used (\ref{jj}) in the second step.
In the same manner one proves that $F\circledast \xi=\xi F$.

\end{proof}

\begin{remark}\label{remarkA6}
Notice that if we assume that $\G$ satisfies (A6), the proof of the previous Theorem is concluded in eq.(\ref{jj}) since every $F \in \G$ is (globally) a finite order derivative of some $g \in \G \cap \C$. The condition (A6) is satisfied by $\G=\A$, and also by $\G = \DO'(\Omega)$ for $\Omega \subseteq \RE$ a compact set. If $\G$ satisfies (A6) we can also conclude that $\cp$ is partially continuous (i.e. it also satisfies (A5.2)). This follows from the partial continuity of the dual product and the fact that from (\ref{jj}), $\xi \cp F= \xi F$ for all $\xi \in \C^\infty$ and $F\in \G$.    
\end{remark}

\begin{remark}\label{remarkak}
Theorem \ref{ak} still holds if in the definition of $\G$ the condition (A5.1) is replaced by (A5.1'). To prove this let us consider the partition of unity $(\phi_i)_i$ defined above by (P1)-(P3). For $F \in \G$ we have
$$
F=(\sum_{i=1}^{+\infty} \phi_i)F=\sum_{i=1}^{+\infty}\phi_i F ~.
$$
Let $F_i=\phi_i F$; since $F_i$ is of compact support and $F_i\in\G$ (cf. (A5.1')) we have $F_i=h_i^{(s_i)}$ for some $s_i \in \N_0$ and $h_i \in \C$ (cf. Corollary 3.4-2a, \cite{Zemanian}). Moreover, since anti-differentiation is an inner operation in $\G$ (cf. (A5.1')), we also have $h_i \in \G$. Hence $F\in\G$ can be written in the form:
$$
F=\sum_{i=1}^{+\infty} h_i^{(s_i)} ~, \quad h_i\in \C\cap\G ~,\quad s_i\in\N_0 
$$
and the rest of the proof follows from eq.(\ref{f*F0}).
\end{remark}

\begin{lemma}\label{cris}
Let $F,G \in\G$. Then  $\text{supp}~(F\circledast G)\subseteq \text{supp }F\cap\text{supp }G$.
\end{lemma}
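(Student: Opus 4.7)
The idea is to localize $F \circledast G$ near an arbitrary point $p \notin \text{supp } F \cap \text{supp } G$ by means of a smooth cut-off $\xi$, and to exploit Theorem \ref{ak} in order to convert freely between the algebra product $\xi \circledast H$ and the dual product $\xi H$ for any $H \in \G$. Combined with the associativity of $\circledast$ and bilinearity, this reduces the question to the trivial identity $0 \circledast G = 0$ (or $F \circledast 0 = 0$).

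\textbf{Key steps.} Fix $p \notin \text{supp } F \cap \text{supp } G$ and suppose first that $p \notin \text{supp } F$. Choose an open neighbourhood $W \ni p$ on which $F$ vanishes, and pick $\xi \in \C^\infty$ with $\xi \equiv 1$ on a smaller neighbourhood $V \subset W$ of $p$ and $\text{supp } \xi \subset W$; then the dual product $\xi F$ is the zero distribution. Applying Theorem \ref{ak} gives $\xi \circledast F = \xi F = 0$, and since $\circledast$ is an inner operation in $\G$ the element $F \circledast G$ lies in $\G$, so associativity yields
$$\xi \circledast (F \circledast G) = (\xi \circledast F) \circledast G = 0 \circledast G = 0.$$
A second application of Theorem \ref{ak}, now to $F \circledast G \in \G$, rewrites the left-hand side as the dual product $\xi (F \circledast G)$. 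Hence $\xi (F \circledast G) = 0$; since $\xi \equiv 1$ on $V$, the distribution $F \circledast G$ vanishes on $V$, and therefore $p \notin \text{supp}(F \circledast G)$. If instead $p \notin \text{supp } G$, the argument is entirely symmetric: pick $\xi$ so that $\xi G = 0$, deduce $G \circledast \xi = \xi G = 0$ from Theorem \ref{ak}, obtain $(F \circledast G) \circledast \xi = F \circledast (G \circledast \xi) = 0$ by associativity, and convert back to the dual product as before.

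\textbf{Main obstacle.} Once Theorem \ref{ak} is at our disposal, the argument is essentially formal; the only subtle point is choosing $\xi$ so that its whole support lies inside the open set on which $F$ (respectively $G$) is identically zero, so that $\xi F$ (resp.\ $\xi G$) really is the zero distribution in $\G$ rather than a distribution merely vanishing on the smaller neighbourhood $V$ of $p$. With that choice in place, Theorem \ref{ak} and the algebra axioms of $(\G,+,\circledast)$ do all the remaining work.
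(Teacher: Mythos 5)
Your proof is correct and follows essentially the same route as the paper's: both arguments hinge on Theorem \ref{ak} to identify $\xi\circledast(\cdot)$ with the dual product, on associativity to move the cut-off onto $F$ (or $G$), and on the vanishing of $\xi F$ when $\operatorname{supp}\xi$ avoids $\operatorname{supp}F$. The only cosmetic difference is that you localize with a single bump function equal to $1$ near a point, whereas the paper pairs $F\circledast G$ against a test function and uses a two-element partition of unity $\phi_1+\phi_2=1$; the underlying mechanism is identical.
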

\begin{proof}
We will prove that $\text{supp }(F\circledast G) \subseteq \text{supp } F$ (the same result is valid for $G$).
The previous statement is equivalent to proving that ($\Omega^c$ denotes the complement of $\Omega$):
\begin{equation}\label{B1}
\left\langle  F\circledast G , t\right\rangle=0 ~, \quad \forall t \in \DO: \text{supp }t \subset (\text{supp } F)^c
\end{equation}
with the obvious exception of the case $\text{supp } F=\RE$ for which the result is trivial. Let us consider a partition of unity $\phi_1, \phi_2\in\C^{\infty} $ such that:
\begin{enumerate}
	\item[(i)] $\phi_1 + \phi_2 =1$,
	\item[(ii)] $\phi_1(x\in \text{supp } F) =0$,
	\item[(iii)] $\phi_2(x\in \text{supp } t) =0$~.
\end{enumerate}
Then
 \begin{eqnarray*}
\left\langle  F\circledast G , t\right\rangle &=&\left\langle (\phi_1 + \phi_2) \circledast (F \circledast G) , t\right\rangle\\
&=& \left\langle  (\phi_1 \circledast F)\circledast G , t\right\rangle +  \left\langle  \phi_2 \, (F\circledast G) , t\right\rangle
\end{eqnarray*}
where we used (\ref{uo}) to obtain the second term. It follows that:
\begin{eqnarray*}
\left\langle  F\circledast G , t\right\rangle =\left\langle (\phi_1 F)\circledast G , t\right\rangle + \left\langle F\circledast G , \phi_2 t\right\rangle =0
\end{eqnarray*}
because, by (ii) and (iii), $\phi_1 F=0$ and $\phi_2 t=0$, respectively.
\end{proof}

\begin{theorem}\label{ii}
Let $s,t\in \RE$. Then $H(x-s)\circledast H(x-t) = H(x-max\left\{s,t\right\}).$
\end{theorem}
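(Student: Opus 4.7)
My plan is to pin down $H(x-s)\circledast H(x-t)$ by computing the product of the continuous piecewise smooth functions $h_{1,r}(x):=(x-r)H(x-r)$, $r\in\RE$, in two different ways and then differentiating twice. Since $h_{1,s},h_{1,t}\in \C_p^\infty\cap\C\subseteq\G\cap\C$, property (A4) gives
\[
h_{1,s}\circledast h_{1,t} \;=\; h_{1,s}\cdot h_{1,t} \;=\; (x-s)(x-t)\,H(x-m), \qquad m:=\max\{s,t\},
\]
and because $(x-s)(x-t)$ vanishes at $m$, direct differentiation of the right-hand side yields $D_x^2[h_{1,s}\circledast h_{1,t}] = 2\,H(x-m)+(2m-s-t)\delta_m$.

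On the other hand, using $(x-r)\delta_r=0$ to see that $h_{1,r}'=H(x-r)$, and $H(x-r)'=\delta_r$, a double application of the Leibniz rule (A3) produces the alternative expression
\[
D_x^2[h_{1,s}\circledast h_{1,t}] \;=\; \delta_s\circledast h_{1,t} \;+\; 2\bigl[H(x-s)\circledast H(x-t)\bigr] \;+\; h_{1,s}\circledast \delta_t.
\]
Equating the two expressions reduces the problem to evaluating the two cross terms. Writing $h_{1,t}=(x-t)\circledast H(x-t)$ (Theorem \ref{ak}), using associativity, and applying $(x-t)\circledast\delta_s=(x-t)\delta_s=(s-t)\delta_s$, I obtain $\delta_s\circledast h_{1,t}=(s-t)[\delta_s\circledast H(x-t)]$, and analogously $h_{1,s}\circledast\delta_t=(t-s)[H(x-s)\circledast\delta_t]$.

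The residual ``$\delta$-times-$H$'' products are handled by cases. When the singular supports do not meet, Lemma \ref{cris} forces the product to vanish (e.g.\ $\delta_s\circledast H(x-t)=0$ when $s<t$). When they meet at a single point, I localize: for $s<t$, take $\eta\in\C^\infty$ with $\eta\equiv 1$ on a neighborhood of $t$ and $\text{supp}\,\eta\subset(s,+\infty)$; then $\eta H(x-s)=\eta$, and since $H(x-s)\circledast\delta_t$ is supported at $\{t\}$ by Lemma \ref{cris},
\[
H(x-s)\circledast\delta_t \;=\; \eta\bigl[H(x-s)\circledast\delta_t\bigr] \;=\; \bigl(\eta H(x-s)\bigr)\circledast\delta_t \;=\; \eta\delta_t \;=\; \delta_t,
\]
by Theorem \ref{ak} and associativity; the case $s>t$ is symmetric. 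Inserting these evaluations into the two expressions for $D_x^2[h_{1,s}\circledast h_{1,t}]$, the $\delta_m$-contributions match exactly on both sides, and one is left with $2[H(x-s)\circledast H(x-t)]=2H(x-m)$, which is the claim.

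The main obstacle is the case $s=t$: Lemma \ref{cris} alone only forces $H\circledast H-H$ to be supported at $\{s\}$, which a priori allows any finite linear combination of derivatives of $\delta_s$. The point of working with the continuous antiderivatives $h_{1,s},h_{1,t}$ is precisely to sidestep this difficulty: when $s=t$, the prefactors $(s-t)$ and $(t-s)$ kill both cross terms, while $(2m-s-t)=0$ wipes out the delta contribution on the right, so the Leibniz identity collapses directly to $2(H\circledast H)=2H$ without one ever having to compute the a priori unknown products $\delta_s\circledast H$ and $H\circledast\delta_s$.
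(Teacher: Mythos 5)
Your proof is correct, and its core mechanism is the same as the paper's: take continuous antiderivatives, multiply them with (A4), differentiate twice using the Leibniz rule, and kill the cross terms $\delta_s\circledast(\cdot)$ by peeling off a smooth factor that vanishes at $s$ (the paper squares $|x-s|$ and shows $\delta_s\circledast|x-s|=0$ via $|x|=x\circledast H-x\circledast H(-x)$; you use $(x-r)H(x-r)$ and the scalar prefactor $(s-t)$, which is the same trick in a different guise). The organizational difference is that the paper disposes of $s\neq t$ by a two-line support argument (Lemma \ref{cris} together with $H(s-x)=1-H(x-s)$) and reserves the squaring argument for the only delicate case $s=t$, whereas you run a single computation covering all $s,t$; the price of uniformity is that for $s<t$ you must additionally evaluate $H(x-s)\circledast\delta_t=\delta_t$, which you do correctly by combining Lemma \ref{cris} with a smooth cutoff $\eta$, Theorem \ref{ak} and associativity --- a fact the paper only establishes later (Theorem \ref{ty}(1)) by a slightly different support argument. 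Both routes are sound; yours is marginally longer on the generic case but self-contained and uniform, and your localization argument is a clean alternative derivation of the $H\circledast\delta$ evaluation.
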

\begin{proof}
Let us first consider $s<t$. Since $\text{supp }H(s-x)\cap\text{supp } H(x-t)= \emptyset$, it follows from {Lemma} \ref{cris} that 
\begin{equation}\label{999}
H(s-x)\circledast H(x-t) =0 ~.
\end{equation}
 Taking into account that $H(s-x) =1-H(x-s) $, we easily obtain $H(x-s)\circledast H(x-t) =H(x-t)$. The other case $s>t$ is proved in the same way.

Let now $s=t$.
It follows from (A4) that
\begin{equation} \label{7}
\left|x-s\right|\circledast \left|x-s\right|=(x-s)^2
\end{equation}
Twice differentiating this equation, we get
\begin{equation} \label{8}
\delta_s \circledast \left|x-s\right|+ (D_x\left|x-s\right|)\circledast(D_x\left|x-s\right|)+\left|x-s\right|\circledast \delta_s  =1
\end{equation}
where we took into account that
 \begin{equation} \label{8h}
D_x^2\left|x-s\right|=2\delta_s ~.
\end{equation}
We now prove that $\delta_s \circledast \left|x-s\right|=0$. To make it simple let $s=0$. We have: 
\begin{eqnarray*}
\delta \circledast \left|x\right| &=& \delta \circledast \left( x \circledast H -x \circledast H(-x) \right)  \\
&=& (\delta \circledast x)\circledast H-(\delta \circledast x)\circledast H(-x)=0 ~.
\end{eqnarray*}
Notice that $x \circledast H=x H$ and $\delta\circledast x=x\circledast \delta= x \delta= 0$ (cf. Theorem \ref{ak}).
In the same way one proves that ${\left|x-s\right|}\circledast \delta_s =0 $. Hence eq.(\ref{8}) reduces to
\begin{eqnarray*}
 & &({2H(x-s)-1})\circledast ({2H(x-s)-1})={1}\\
&\Longleftrightarrow & {H(x-s)}\circledast{H(x-s)}={H(x-s)}
\end{eqnarray*}
which concludes the proof.
\end{proof}

\indent 

Before we proceed to the next Theorem, let us recall the following useful formula which is valid for all $n,m \in \N_0$ (eq.(26), section 2.6, \cite{Kanwal}):
\begin{equation}\label{io}
x^{n}\delta ^{(m)}=\left\{ 
\begin{array}{lll}
0~, && \quad m < n \\	
(-1)^n\frac{m!}{(m-n)!}\delta ^{(m-n)}~, && \quad m \ge n
\end{array}
\right.
\end{equation}
where we used the convention $0!=1$. 
The case $m\ge n$ can be easily inverted, yielding
\begin{equation}\label{iok}
\delta ^{(j)}=(-1)^n\frac{j!}{(j+n)!} x^{n}\delta ^{(j+n)} ~, \quad \forall j,n \in \N_0 ~.
\end{equation}


\begin{theorem}\label{3.1} For every $s,t \in \RE$, and every $i,j \in \N_0$ we have
\begin{equation}\label{10}
\delta^{(i)}_s \circledast \delta^{(j)}_t=0 ~.
\end{equation}
\end{theorem}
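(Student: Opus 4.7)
The plan is to split into two cases according to whether $s=t$ or $s\neq t$, handling the off-diagonal case immediately via the support lemma and reducing the diagonal case to the trivial identity $x^n \delta^{(i)} = 0$ (for $n>i$) by rewriting $\delta^{(j)}_s$ as a product of a smooth function with a higher-order derivative of delta.

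First, if $s \neq t$ then $\text{supp } \delta^{(i)}_s = \{s\}$ and $\text{supp } \delta^{(j)}_t = \{t\}$ are disjoint, so Lemma \ref{cris} yields $\text{supp}(\delta^{(i)}_s \circledast \delta^{(j)}_t) \subseteq \{s\} \cap \{t\} = \emptyset$ and the product is zero.

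For the diagonal case $s = t$, I will apply formula (\ref{iok}), taking the shifted variable $x-s$ and choosing $n > i$, to write
\begin{equation*}
\delta^{(j)}_s \;=\; (-1)^{n}\,\frac{j!}{(j+n)!}\,(x-s)^{n}\,\delta^{(j+n)}_s .
\end{equation*}
Since $(x-s)^{n} \in \C^{\infty}$, Theorem \ref{ak} lets me convert this dual product into a $\circledast$-product: $(x-s)^n \,\delta^{(j+n)}_s = (x-s)^n \circledast \delta^{(j+n)}_s$. Then, using associativity of $\circledast$ and Theorem \ref{ak} once more to slide the smooth factor $(x-s)^n$ to the left side, I obtain
\begin{equation*}
\delta^{(i)}_s \circledast \delta^{(j)}_s \;=\; (-1)^n \tfrac{j!}{(j+n)!}\,\bigl((x-s)^{n}\,\delta^{(i)}_s\bigr) \circledast \delta^{(j+n)}_s .
\end{equation*}
But with $n>i$, formula (\ref{io}) gives $(x-s)^n \delta^{(i)}_s = 0$, so the right-hand side vanishes, finishing the proof.

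I do not anticipate a real obstacle: the only subtlety is making sure that the rewriting of $\delta^{(j)}_s$ remains in $\G$ (it does, since $\G$ contains $\A$ and $\delta^{(j+n)}_s \in \A$), and that the associative shuffle is legitimate, which is automatic because $(x-s)^n$ is smooth so Theorem \ref{ak} applies.
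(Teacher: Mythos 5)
Your proof is correct. The off-diagonal case coincides with the paper's (both use Lemma \ref{cris}). For the diagonal case you draw on the same toolkit as the paper --- the identities (\ref{io}) and (\ref{iok}), Theorem \ref{ak}, and associativity --- but you arrange it more economically. The paper first uses Lemma \ref{cris} to write $\delta^{(i)}\circledast\delta^{(j)}=\sum_k a_k\delta^{(k)}$ and then eliminates the coefficients in two stages: left-multiplication by $x^{i+1}$ kills the terms with $k\ge i+1$, and only afterwards does it substitute $\delta^{(j)}=b_{ij}\,\delta^{(j+i+1)}x^{i+1}$ and exploit the already-established form of $\delta^{(i)}\circledast\delta^{(j+i+1)}$ to kill the rest. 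Your version bypasses the structural step entirely: writing $\delta^{(j)}_s$ as a constant times $(x-s)^n\circledast\delta^{(j+n)}_s$ with $n>i$ and reassociating so that the smooth factor lands on $\delta^{(i)}_s$ collapses the product to a multiple of $\bigl((x-s)^n\,\delta^{(i)}_s\bigr)\circledast\delta^{(j+n)}_s=0$ in a single step, with no normalization $i\le j$ and no need to know anything about the support or shape of $\delta^{(i)}_s\circledast\delta^{(j+n)}_s$. Both arguments rest on exactly the same hypotheses (Theorem \ref{ak}, bilinearity and associativity of $\circledast$, and the fact that $\G\subseteq\DO'$ so that the dual-product identity (\ref{iok}) can be read inside $\G$), so nothing is lost; yours is simply shorter.
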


\begin{proof}
Since $\text{supp }(\delta^{(i)}_s \circledast \delta^{(j)}_t)\subseteq \text{supp }\delta^{(i)}_s\cap\text{ supp }\delta^{(j)}_t$
(cf. Lemma \ref{cris}), for $t \not= s$ we have
$\delta^{(i)}_s \circledast \delta^{(j)}_t=0$.
Moreover, if $t=s$, we get:
\begin{equation}\label{kj}
\delta^{(i)}_s \circledast \delta^{(j)}_s=\sum_{k=0}^n a_k\delta^{(k)}_s 
\end{equation}
for some $n \in \N_0$ and $a_k\in\RE$, $k=0,..,n$. To simplify the presentation, assume that $s=0$ and $i\leq j$. Then:
\begin{eqnarray*}
 x^{i+1}\circledast(\delta^{(i)}\circledast\delta^{(j)})  & = & (x^{i+1}\circledast\delta^{(i)})\circledast\delta^{(j)}\\
&= & (x^{i+1}\delta^{(i)})\circledast\delta^{(j)}=0
\end{eqnarray*}
where we used (\ref{io}).
Substituting (\ref{kj}) in the previous expression: 
\begin{equation}
x^{i+1}\circledast\sum_{k=0}^n a_k\delta^{(k)}= 
x^{i+1}\sum_{k=0}^n a_k\delta^{(k)}=0 ~\Longrightarrow ~
a_k=0, \quad \forall k\geq i+1
\end{equation}
and so
\begin{equation}\label{D}
\delta^{(i)}\circledast\delta^{(j)}= \sum_{k\leq i} a_k \delta ^{(k)} ~.
\end{equation}
Let us return to (\ref{iok}) and set $n=i+1$:
\begin{equation}
\delta ^{(j)}=b_{ij}\delta^{(j+i+1)}x^{i+1} \quad {\rm where} \quad b_{ij}=(-1)^{i+1} \tfrac{j!}{(j+i+1)!} ~.
\end{equation}
It follows that:
\begin{equation}\label{2}
\delta ^{(i)}\circledast \delta ^{(j)}=b_{ij} \left( \delta^{(i)}\circledast\delta^{(j+i+1)} \right) \circledast x^{i+1} 
\end{equation}
where we used (\ref{uo}) and the associativity of $\circledast$. Since (\ref{D}) is valid for all $j$, we have:
\begin{equation}\label{1}
\delta ^{(i)}\circledast \delta ^{(j+i+1)}=  \sum_{k\leq i} a'_k \delta ^{(k)}
\end{equation}
for some $a'_k\in\RE$. Substituting in (\ref {2}) and using (\ref{io}), we finally get
\begin{eqnarray*}
\delta^{(i)}\circledast\delta^{(j)} = b_{ij} \left( \sum_{k\leq i} a'_k \, \delta ^{(k)}\right) \circledast x^{i+1}
= b_{ij} \sum_{k\leq i} a'_k \, \delta ^{(k)} x^{i+1}=0
\end{eqnarray*}
which concludes the proof.
\end{proof}

\begin{theorem} \label{ty}
\begin{flushleft}
For all $n\in \N_0$ and $s,t\in \RE$ we have:
\begin{itemize}
	 \item [(1)]
 $
  H(x-t)\circledast \delta_s^{(n)} = \delta_s ^{(n)} \circledast H(x-t)=
\left\{ \begin{array}{l} 
\begin{tabular}{ll}
$\delta_s^{(n)}$ & \text{ if } $s>t$\\
0  & \text{ if } $s<t$
\end{tabular}
\end{array} \right.
$
	  \item [(2)]$
H(x-t) \circledast \delta_t ^{(n)}=c_t\delta_t^{(n)},\,\,\,\,\,\,\,
\delta_t ^{(n)} \circledast H(x-t)=(1-c_t)\delta_t^{(n)}$
where $c_t$ is some function $c_t:\RE\longrightarrow \left\{0,1\right\}$.

\end{itemize}
\end{flushleft}
\end{theorem}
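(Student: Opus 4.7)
The plan is to handle the two cases separately, using Lemma \ref{cris} as the common starting point in each.

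For part (1), where $s\neq t$, I would use a pure support argument. If $s<t$, then $\text{supp}\, H(x-t)=[t,+\infty)$ and $\text{supp}\,\delta_s^{(n)}=\{s\}$ are disjoint, so Lemma \ref{cris} forces $H(x-t)\circledast\delta_s^{(n)}=0$, and symmetrically for the other order. If $s>t$, I would rewrite $H(x-t)=1-H(t-x)$. Now $\text{supp}\,H(t-x)=(-\infty,t]$ is disjoint from $\{s\}$, so $H(t-x)\circledast\delta_s^{(n)}=0$ by Lemma \ref{cris}; combined with the identity axiom (A2), i.e.\ $1\circledast\delta_s^{(n)}=\delta_s^{(n)}$, and bilinearity of $\circledast$, this gives $H(x-t)\circledast\delta_s^{(n)}=\delta_s^{(n)}$. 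The reverse order is identical.

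For part (2), fix $t$ and abbreviate $H_t=H(x-t)$. By Lemma \ref{cris}, $H_t\circledast\delta_t^{(n)}$ is supported at $\{t\}$, hence equals $\sum_{k=0}^{N}a_k\delta_t^{(k)}$ for some $N$. To prune the sum, I would compute $(H_t\circledast\delta_t^{(n)})\circledast(x-t)^{n+1}$ two ways: on the one hand, by Theorem \ref{ak} and associativity it equals $H_t\circledast((x-t)^{n+1}\delta_t^{(n)})=0$ via (\ref{io}); on the other hand it equals $\sum_k a_k (x-t)^{n+1}\delta_t^{(k)}$, and (\ref{io}) together with the linear independence of $\{\delta_t^{(j)}\}$ forces $a_k=0$ for $k\geq n+1$. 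The base case $n=0$ needs the additional step of multiplying by $(x-t)$ to kill $a_k$ for $k\geq 1$, yielding $H_t\circledast\delta_t=c_t\,\delta_t$.

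The main obstacle is proving $c_t\in\{0,1\}$ rather than merely $c_t\in\RE$. For this I would invoke Theorem \ref{ii}, giving $H_t\circledast H_t=H_t$, and apply it to $\delta_t$ via associativity:
\[
c_t\,\delta_t = H_t\circledast\delta_t = (H_t\circledast H_t)\circledast\delta_t = H_t\circledast(c_t\,\delta_t)=c_t^2\,\delta_t,
\]
so $c_t^2=c_t$ and $c_t\in\{0,1\}$. The extension to $H_t\circledast\delta_t^{(n)}=c_t\,\delta_t^{(n)}$ is then a straightforward induction on $n$: differentiating $H_t\circledast\delta_t^{(n-1)}=c_t\,\delta_t^{(n-1)}$ via the Leibniz rule produces $\delta_t\circledast\delta_t^{(n-1)}+H_t\circledast\delta_t^{(n)}$, and the first term vanishes by Theorem \ref{3.1}. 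For the companion identity, applying $D_x$ and the Leibniz rule to $H_t\circledast H_t=H_t$ yields $H_t\circledast\delta_t+\delta_t\circledast H_t=\delta_t$, giving $\delta_t\circledast H_t=(1-c_t)\delta_t$; the higher order case $\delta_t^{(n)}\circledast H_t=(1-c_t)\delta_t^{(n)}$ follows by the same induction, with Theorem \ref{3.1} again killing the Leibniz cross term. Everything beyond the idempotence identity $c_t^2=c_t$ is routine combination of associativity, Leibniz, and the already-established Theorems \ref{ak}, \ref{ii}, and \ref{3.1}.
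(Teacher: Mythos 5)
Your proposal is correct and follows essentially the same route as the paper's own proof: disjoint supports via Lemma \ref{cris} plus $H(x-t)=1-H(t-x)$ for part (1); and for part (2), the point-support classification, pruning by multiplication with $(x-t)$, idempotence of $H_t$ from Theorem \ref{ii} to force $c_t^2=c_t$, differentiation of $H_t\circledast H_t=H_t$ for the companion identity, and induction on $n$ with Theorem \ref{3.1} killing the cross term.
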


\begin{proof}
(1) For $s<t$, $\text{supp }H(x-t)\cap\text{supp } \delta _s^{(n)}= \emptyset$ and thus the product is zero.
For $s>t$, we have $H(x-t)=1-H(t-x)$ and since  $\text{supp }H(t-x)\cap\text{supp } \delta _s^{(n)}= \emptyset$ we get: 
$$
H(x-t)\circledast\delta_s^{(n)}=\delta_s^{(n)}\circledast H(x-t)=\delta_s^{(n)} ~.
$$
(2) Let us begin by proving that the formulas are true for $n=0$. Assume for simplicity that $t=0$.
Since $\text{supp }H(x)\cap\text{supp }\delta =\left\{0\right\}$ we have
$H(x)\circledast\delta={\sum_{k=0}^m} c_k \delta ^{(k)}$, for some $m \in \N_0$ and $c_k\in\RE$.
As before 
$$(H\circledast\delta)\circledast x=H\circledast(\delta\circledast x)=0 ~\Longrightarrow ~ \sum_{k=0}^m c_k (\delta ^{(k)}\circledast x)=0 ~\Longrightarrow ~ c_k=0, ~ \forall k\neq0
$$
and thus $H\circledast\delta=c\delta$ for some $c\in \RE$. Moreover (cf. Theorem \ref{ii})
\begin{eqnarray*}
&& H\circledast(H\circledast\delta)=(H\circledast H)\circledast\delta=H\circledast\delta \\
&\Longleftrightarrow &  H\circledast c\delta=c\delta  ~\Longleftrightarrow ~c^2\delta=c\delta ~\Longleftrightarrow ~  c=0 \vee c=1 ~.
\end{eqnarray*}
Generalizing now for $t\in\RE$:
$H(x-t)\circledast\delta_t=c_t \delta_t$
where $c_t$ is an arbitrary function $c_t:\RE\longrightarrow \left\{0,1\right\}$.
Each function $c_t$ defines a particular $\circledast$-product. Let us denote it by $\circledast_{c_t}$.
Thus $H(x-t)\circledast_{c_t}\delta_t=c_t\delta_t$. 

Let us proceed.
Differentiating $H(x-t)\circledast_{c_t} H(x-t)=H(x-t)$ we get:
\begin{eqnarray*}
& &\delta_t\circledast_{c_t} H(x-t)+H(x-t)\circledast_{c_t}\delta_t=\delta_t  \\
&\Longleftrightarrow & \delta_t\circledast_{c_t} H(x-t)=(1-c_t)\delta_t  
\end{eqnarray*}
which completes the proof of (2) for $n=0$.

Assume now that
\begin{equation}\label{1t}
H(x-t)\circledast_{c_t}\delta_t^{(n)}=c_t\delta_t^{(n)}
\end{equation}
holds for some $n\in \N$.
Differentiating (\ref{1t}) we get
$$
\delta_t\circledast_{c_t}\delta_t^{(n)}+H(x-t)\circledast_{c_t}\delta_t^{(n+1)}=c_t\delta_t^{(n+1)}
$$
and since the first term is zero (cf. Theorem \ref{3.1}) we conclude that (\ref{1t}) is valid for $n+1$, and thus for all $n \in \N_{0}$.
In the same way one proves that $\delta_t ^{(n)} \circledast H(x-t)=(1-c_t)\delta_t^{(n)}$, for all $n \in \N_{0}$.
\end{proof}\\




\subsection{Main Theorem}

We can now easily prove the Main Theorem.

\begin{proof}[Proof]
	
The inclusion $\A \subseteq \G$ follows directly from (A1') and the fact that the distributional derivative $D_x$ is an inner operator in $\G$. 

Let then $F,G \in \A$, we want to prove that $F \circledast G= F *_M G$ for some $M \subseteq \RE$. Let us write $F,G$ in the form (\ref{FormF}). From the distributive property of $\circledast$:
\begin{eqnarray}\label{proddist}
F\circledast G &=& (f+\Delta^F) \circledast (g+\Delta^G)\\ 
&=& f\circledast g + f\circledast \Delta^G+\Delta^F \circledast g + \Delta^F \circledast \Delta^G \nonumber ~.
\end{eqnarray}

Let us consider each term separately:

\indent

1) We first prove that  
\begin{equation}\label{func_func}
f\circledast g= f*_Mg ~ , \quad  \forall f,g \in \C_p^\infty ~, \quad \forall M\subseteq \RE ~.
\end{equation}
Since (cf. Theorem \ref{ak}):
$$
\xi H(x-a)=\xi \circledast H(x-a) = H(x-a) \circledast \xi \, , \quad \forall \xi \in \C^{\infty} 
$$
and also (cf. Theorem \ref{ii}):
$$
H(x-a) \circledast H(x-b) = H(x-{\rm max}\{a,b\})
$$
we have for $f=f_iH(x-a)$ and $g=g_jH(x-b)$, $f_i,g_j \in \C^\infty$, using the associativity of $\circledast$:
\begin{equation}\label{eqfunc}
f\circledast g = f g= f*_M g
\end{equation}
where the second identity follows from (\ref{Gprod11}) and holds for all $M \subseteq \RE$. Moreover, for $f$ or $g$ smooth, (\ref{eqfunc}) also holds (cf. Theorem \ref{ak}). 
Since every $f,g \in C_p^\infty$ is the sum of a smooth function with a finite number of functions of the form $\xi H(x-a)$, $\xi \in \C^\infty$, $a\in \RE$, using the distributive property of $\circledast$ and $*_M$ we get (\ref{func_func}). 

\indent

2) We now prove that for some $M\subseteq \RE$ 
\begin{equation} \label{eqfuncdist}
f \circledast \Delta^G = f *_M \Delta^G \, , \quad \forall f \in \C_p^\infty ~, ~\forall G \in \A ~.
\end{equation} 
It follows from Theorem \ref{ty} and Remark \ref{2.6} that
\begin{equation} \label{3.24}
H(x-t)\circledast\delta_s^{(n)}=H(x-t)*_M\delta_s^{(n)}
\end{equation}
where $M=\left\{t\in\RE:c_t=0\right\}$.
Since (cf. Theorem 2.11(v) and Theorem 3.1) 
 \begin{equation} \label{xiF}
 \xi\circledast F = \xi *_M F=   \xi F \, , \quad  \forall  \xi \in\C^{\infty} \, , \quad \forall F \in \A
\end{equation}
we get
\begin{eqnarray}
(\xi H(x-t))\circledast\delta_s^{(n)} &=&
\xi \cp (H(x-t)\circledast\delta_s^{(n)}) \\
&=&\xi *_M (H(x-t)*_M\delta_s^{(n)})\nonumber\\
&=&(\xi H(x-t))*_M\delta_s^{(n)} \nonumber
\end{eqnarray}
where in the first and last steps we used (\ref{xiF}) and the associativity of $\cp$ and $*_M$, and in the second step we used (\ref{3.24}) and (\ref{xiF}).

Moreover, $\Delta^G$ is of the form (\ref{FormFF2}) and every $f\in\C_p^{\infty}$ is the sum of some $\psi \in \C^\infty$ with a finite linear combination of terms of the form $\xi H(x-t),~ \xi\in \C^{\infty}$. Hence,
using the distributive property of $\circledast$ and $*_M$, we get (\ref{eqfuncdist}).
An equivalent proof shows that
\begin{equation} \label{distri_funct}
\Delta^F \circledast g =\Delta^F*_M g ~, \quad \forall F\in\A \, , \quad  \forall g\in \C_p^{\infty} ~.
\end{equation}
Notice that once the set $M$ is fixed for the product  $g \circledast \Delta^F$, the product in the reversed order is also fixed, i.e. $\Delta^F \circledast g= \Delta^F *_M g$ (with the same $M$); this follows from Theorem \ref{ty}(2), which determines that the function $c_t$ (and thus the set $M$) is the same in both products.

\indent

3) Finally, we have 
\begin{equation}\label{di_di}
\Delta^F \circledast \Delta^G= \Delta^F *_M \Delta^G =0
\end{equation}
which is true because $\Delta^F$ and $\Delta^G$ are both of the form (\ref{FormFF2}) and so, from  the distributive property of $\circledast$, Theorem \ref{3.1} and the definition of 
$*_M$ (\ref{Gprod11}), we easily conclude that both products in (\ref{di_di}) are zero.     

\indent

Substituting (\ref{func_func},\ref{eqfuncdist},\ref{distri_funct},\ref{di_di}) in (\ref{proddist}) we obtain $F \circledast G=F*_M G$, concluding the proof.

\end{proof}

\subsection*{Acknowledgments}

The authors would like to thank the anonymous referee from Monatshefte f\"ur Mathematik for pointing out an important flaw in the proof of Theorem 3.1. 

Cristina Jorge was supported
by the PhD grant SFRH/BD/85839/2014 of the Portuguese Science
Foundation.



\vspace{4cm}

\textbf{Author's addresses:}

\begin{itemize}
\item \textbf{Nuno Costa Dias}\footnote{Corresponding author} and \textbf{Jo\~ao Nuno Prata:
}Escola Superior N\'autica Infante D. Henrique. Av. Eng.
Bonneville Franco, 2770-058 Pa\c{c}o d'Arcos, Portugal and Grupo
de F\'{\i}sica Matem\'{a}tica, Universidade de Lisboa, Av. Prof.
Gama Pinto 2, 1649-003 Lisboa, Portugal

\item \textbf{Cristina Jorge}: Departamento de Matem\'{a}tica.
Universidade Lus\'{o}fona de Humanidades e Tecnologias. Av. Campo
Grande, 376, 1749-024 Lisboa, Portugal and Grupo de F\'{\i}sica
Matem\'{a}tica, Universidade de Lisboa, Av. Prof. Gama Pinto 2,
1649-003 Lisboa, Portugal

\end{itemize}

\vspace{-0.1cm}

\small

{\it E-mail address} (NCD): ncdias@meo.pt

{\it E-mail address} (CJ): cristina.goncalves.jorge@gmail.com

{\it E-mail address} (JNP): joao.prata@mail.telepac.pt


\begin{thebibliography}{100}

\bibitem{A12} Ahmad, M. R.,   Multiplication of distributions. Diplomarbeit, University of Vienna. Fakult\"{a}t f\"ur Mathematik
Betreuerin: Steinbauer, Roland, 2012.
 
\bibitem{Col84}  Colombeau, J. F., New generalized functions and multiplication of distributions. North-Holland, Amsterdam, 1984.



\bibitem{Col92} Colombeau, J. F., Multiplication of distributions. A tool in mathematics, numerical
engineering and theoretical physics. Lecture Notes in Mathematics, 1532. Springer-Verlag, Berlin, 1992.

\bibitem{DP09} Dias, N.C., Prata, J.N.,  A multiplicative product of distributions and a class of ordinary differential equations with
distributional coefficients. J. Math. Anal. Appl. 359,  216--228, 2009.

\bibitem{DJP16} Dias, N.C., Jorge, C and Prata, J.N., One-dimensional Schr\"odinger operators with singular potentials: A Schwartz distributional formulation.
J. Diff. Eq. 260, 6548--6580, 2016.
 
\bibitem{DJP19} Dias, N.C., Jorge, C and Prata, J.N., Ordinary differential equations with point interactions: An inverse problem, 
J. Math. Anal. Appl. 471, 53--72, 2019. 


\bibitem{DJP20} Dias, N.C., Jorge, C and Prata, J.N., Ordinary differential equations with singular coefficients: An intrinsic formulation with applications to the Euler-Bernoulli beam equation.
J. Dyn. Diff. Eq. 33:593--619, 2021.


\bibitem{Fuch68}  Fuchssteiner, B., Eine assoziative Algebra \"{u}ber
einen Unterraum der Distributionen. Mathematische Annalen,
178, pp. 302–314, 1968.

\bibitem{Fuch84} Fuchssteiner, B.,  Algebraic foundation of some
distribution algebras. Studia Mathematica, 76, pp. 439--453, 1984.

\bibitem{GKO01} Grosser, M., Kunzinger, M. ,Oberguggenberger, M., R. Steinbauer, Geometric theory of generalized
functions, Kluwer Academic Publishers, Dordrecht, 2001.

\bibitem{Hor83} H\"{o}rmander, L., The Analysis of Linear Partial Differential Operators I. Springer-Verlag, Berlin,
Heidelberg, 1983.

\bibitem{HO07} H\"ormann, G., Oparnica, Lj. Distributional solution concepts for the Euler-Bernoulli beam
equation with discontinuous coefficients. Applic. Anal., {86}(11), 1347--1363, 2007.

\bibitem{HKO13} H\"ormann, G., Konjik, S., Oparnica, Lj. Generalized solutions
for the Euler-Bernoulli model with Zener viscoelastic foundations and distributional forces, Anal. Appl., \textbf{11}(2), 1350017 (21 pages), 2013.

\bibitem{Kanwal} Kanwal, R. P., Generalized Functions: Theory and Technique, 2nd Ed. Birkh\"auser, Boston, 1998.

\bibitem{Obe92} Oberguggenberger, M. Multiplication of Distributions and Applications to Partial Differential
Equations, volume \textbf{259} of Pitman Research Notes in Mathematics. Longman, Harlow, U.K., 1992.

\bibitem{Ros87} Rosinger, E. E.,  Generalized Solutions of Nonlinear Partial Differential Equations. North Holland Mathematics Studies, Vol. 146, Amsterdam, 1987.

\bibitem{Sch54} Schwartz, L.,  Sur l'impossibilit\'{e} de la multiplication des distributions. C. R. Acad. Sci. Paris
S\'{e}r. I Math. 239,  847--848, 1954.

\bibitem{Trenn} Trenn, S., Distributional differential algebraic equations. Ph.D. thesis, Institut f\"ur Mathematik,
Technische Universit\"at Ilmenau, Germany
(2009). 

URL http://www.db-thueringen.de/servlets/DocumentServlet?id=13581

\bibitem{Zemanian}  Zemanian, A.H., Distribution Theory and Transform Analysis: An Introduction to Generalized Functions with Applications. McGraw-Hill, New York, 1965.

\end{thebibliography}
\end{document}